\documentclass[12pt]{article}

\usepackage{amsmath,amsthm,amsfonts,amssymb,amscd,caption,color,subcaption,cite}
\usepackage{graphicx}
\usepackage[mathscr]{eucal}
\allowdisplaybreaks[4]
\usepackage[figurename=Fig.]{caption}
\numberwithin{equation}{section}
\allowdisplaybreaks[4]

\newtheorem {Lemma}{Lemma}[section]
\newtheorem {Theorem} {Theorem}[section]

\newtheorem {Claim} {Claim}[section]
\usepackage{tikz}
\usetikzlibrary{positioning}
\usepackage{circuitikz}
\usetikzlibrary{patterns}
\usepackage{fullpage}

\begin{document}

\title{Unbalanced signed bipartite graphs containing no negative $C_4$ with maximum spectral radius}

\author{Yiting Cai$^{a}$\footnote{E-mail: yitingcai@m.scnu.edu.cn}, Hongying Lin$^{b}$\footnote{E-mail: linhy99@scut.edu.cn}, Bo Zhou$^{a}$\footnote{E-mail: zhoubo@scnu.edu.cn} \\
$^{a}$School of  Mathematical Sciences, South China Normal University,\\
Guangzhou 510631, P.R. China\\
$^{b}$ School of Mathematics, South China University of Technology,\\ 
Guangzhou 510641, P.R. China
}

\date{}
\maketitle

\begin{abstract}
A signed graph $(G,\sigma)$ is a graph $G$ together with an assignment $\sigma$ of either a positive sign or a negative sign to each edge.
A signed graph is unbalanced if it contains a cycle with odd number of negative edges. The spectral radius of a signed graph is the spectral radius of its adjacency matrix, in which for vertices $u,v$, the  $(u,v)$-entry is $0$, $-1$, or $1$ depending on whether
$uv$ represents no edge, a negative edge, or a positive edge, respectively. Recently,  Conde, Dratman and Grippo [Discrete Math. 349 (2026) 114942]  proved that there is only one unbalanced signed bipartite graph with maximum spectral radius, up to switching isomorphism. In this paper, we establish a spectral Tur\'an type results for signed bipartite graphs. More precisely, we determine the unique graphs  containing no negative cycles of length four with maximum spectral radius, up to switching isomorphism, among  unbalanced signed bipartite graphs with fixed bipartite sizes and order, respectively. \\ \\
{\bf Key words:} bipartite graph, unbalanced signed graph, spectral radius, spectral Tur\'an type result, switching isomorphism \\ \\
{\bf AMS Classification:} 05C22, 05C50
\end{abstract}

\section{Introduction}
We consider simple graphs. Denote by $C_m$ a cycle of order $m\ge 3$. Denote by $K_n$ a complete graph of order $n$.
Let $G$ be a  bipartite graph and $X$ and $Y$  its partite sets. If $|X| =r$ and $|Y| = s$,  then we say that $G$ has bipartite sizes $r$ and $s$ or $G$ is an $r$ by $s$ bipartite graph.  Denote by $K_{r,s}$ a complete $r$ by $s$ bipartite graph. 
There are studies on the  maximum number of edges in $r$ by $s$ bipartite graphs or of  bipartite graphs with given order  without certain cycle(s). 
For example, de Caen and Sz\'{e}kely \cite{deZ} studied the maximum number of edges in  $r$ by $s$ bipartite graphs without $C_4$ or a $C_6$, Hoory \cite{Ho} derived upper bounds for the sizes of $r$ by $s$ bipartite graphs with given girth, and   Naor and  Verstera\" ete \cite{NV} gave 
 upper bounds for the sizes of $r$ by $s$ bipartite graphs without  $C_{2k}$. Such pronlems belong to  the classical Tur\'{a}n-type extremal problem that is to determine 
the  maximum number of edgesin  graphs  without certain graph(s). 

A signed graph $\Gamma=(G, \sigma)$ is a simple graph $G$ (called the underlying graph) with vertex set $V(G)$ and edge set $E(G)$ together with a function $\sigma$ defined from  $E(G)$ into $\{-1,1\}$ (called the signature). The adjacency matrix $A(\Gamma)=(a_{uv})_{u,v\in V(G)}$ of $\Gamma$ is defined as
\[
a_{uv}=\begin{cases} \sigma(uv) & \mbox{if $uv\in E(G)$},\\
0 & \mbox{otherwise}.
\end{cases}
\]
The spectrum of $A(\Gamma)$ is called the spectrum of the signed graph $\Gamma$.
For a signed graph $\Gamma$ of order $n$ with $m$ edges,
its eigenvalues $\lambda_1(\Gamma)\ge \lambda_2(\Gamma)\ge \dots\ge\lambda_n(\Gamma)$   are all real numbers because $A(\Gamma)$ is a real symmetric matrix. 
It may be easily checked that $\sum_{i=1}^n\lambda_i^2(\Gamma)=2m$. 
The largest eigenvalue $\lambda_1(\Gamma)$ is often called the index. The spectral radius $\rho(\Gamma)$ of $\Gamma$ is the largest absolute of its eigenvalues, i.e.,
$\rho(\Gamma)=\max\{\lambda_1(\Gamma), -\lambda_n(\Gamma)\}$.


If $U$ is any subset of the vertices of a signed graph $\Gamma$, then switching in $U$ reverses the signs of all edges between vertices $u$ and $v$ for which $u\in U$ and $v\not\in U$. The resulted signed graph is denoted by $\Gamma (U)$.
Signed graphs are switching equivalent if one can be obtained from the other by switching in some subset $U$. Obviously, a signed graph is switching equivalent to itself.
Two signed graphs are switching isomorphic if one is isomorphic to a signed graph that is switching equivalent to the other.

For a signed graph $\Gamma=(G, \sigma)$ with $e\in E(G)$,
if $\sigma(e)=1$ ($\sigma(e)=-1$, respectively), then the edge $e$ is positive (negative, respectively).
%
If each edge of a signed graph $(G,\sigma)$ is positive, then it is denoted by $(G,+)$. An ordinary graph $G$ is also viewed as $(G,+)$.

A cycle  in $\Gamma$ is  positive (negative, respectively) if the number of its negative edges is even (odd, respectively). We use $C_m^-$ to denote any negative cycle of order $m\ge 3$.
A signed graph is called balanced (unbalanced, respectively) if it contains no (at least one, respectively) negative cycle, see \cite{Z2, Za}.
It follows by \cite[Lemma 5.3]{Z2} that a signed graph $(G,\sigma)$ is balanced if and
only if it is switching equivalent to $(G,+)$.




The index of unbalanced signed graphs received due attention, see, e.g.  \cite{ABH,BS,BS2,HLSW,WL,WPH}. For example,
Wang and Lin \cite{WL} determined the signed graphs with maximum index among unbalanced  signed graphs containing no $C_4^-$.

The spectral radius of unbalanced signed graphs was also studied, see, e.g.  \cite{BBC,BS2,BT,CY, WHL, W, CZ}. IFor example the signed graphs with maximum spectral radius have been determined among unbalanced  signed graphs containing no negative $K_r$ for $r=3,4,5$ \cite{CY, WHL, W}. The signed graphs with maximum spectral radius have been determined among unbalanced  signed graphs containing no negative $C_3$ and $C_4$ \cite{CZ}.

For a signed graph $\Gamma$, 
let $-\Gamma$ be the signed graph obtained from $\Gamma$ by reversing the sign of each edge.
If $\Gamma$ and $-\Gamma$ are switching equivalent, then we say $\Gamma$ is sign-symmetric, and in this case,
 $\rho(\Gamma)=\lambda_1(\Gamma)=-\lambda_n(\Gamma)$.
For a signed bipartite graph  $\Gamma=(G,\sigma)$, $G$ is bipartite, so it is  sign-symmetric, that is, 
 $\rho(\Gamma)=\lambda_1(\Gamma)$ \cite{BB}.
For more results about sign-symmetric signed graphs, see \cite{BC}.
Very recently,  Conde,  Dratman and Grippo \cite{CDG} proved that among connected unbalanced signed bipartite graphs of order $n\ge 2$, a signed graph maximizes the spectral radius if and only if it is switching isomorphic to
the signed graph $(K_{\lfloor\frac{n}{2}\rfloor, \lceil\frac{n}{2}\rceil}, \sigma)$ with exactly one negative edge.

%

Note that there are negative $C_4$s in the signed graph $(K_{\lfloor\frac{n}{2}\rfloor, \lceil\frac{n}{2}\rceil}, \sigma)$ with exactly one negative edge.  So it is of interest to determine the signed graphs maximzing the spectral radius among unbalanced signed $r$ by $S$ bipartite graphs containing no  $C_4^{-}$, and among unbalanced signed  bipartite graphs of fixed order containing no  $C_4^{-}$, respectively. 

Let $r$ and $s$ be two integers with $3\le r\le s$. 
Let $\Gamma_{r,s}$  be the signed bipartite graph obtained from a copy of $(K_{r-1, s-1},+)$ by deleting an edge, say $uv$, and adding a path of length three to connecting $u$ and $v$ whose central edge is the only negative edge, see Fig. \ref{f2}. 


\begin{figure}[htbp]
	\centering
	%
	%
	\begin{tikzpicture}
		\filldraw [black] (1,-0.8) circle (2pt);
		\filldraw [black] (2.2,-0.8) circle (2pt);
		\filldraw [black] (3.4,-0.8) circle (2pt);
		\filldraw [black] (3.8,-0.8) circle (0.8pt);
		\filldraw [black] (4.1,-0.8) circle (0.8pt);
		\filldraw [black] (4.4,-0.8) circle (0.8pt);
		\filldraw [black] (4.8,-0.8) circle (2pt);
		\filldraw [black] (1,-3) circle (2pt);
		\filldraw [black] (2.2,-3) circle (2pt);
		\filldraw [black] (3.4,-3) circle (2pt);
		\filldraw [black] (3.8,-3) circle (0.8pt);
		\filldraw [black] (4.1,-3) circle (0.8pt);
		\filldraw [black] (4.4,-3) circle (0.8pt);
		\filldraw [black] (4.8,-3) circle (2pt);
		\draw (3.5,-3) ellipse (2.1 and 0.3);
		\draw (3.5,-0.8) ellipse (2.1 and 0.3);
		
		\draw  [red,line width=1.8pt] (1,-0.8)--(1,-3);
		\node at (0.7,-1.9) {$-$};
		\draw  [black](1,-0.8)--(2.2,-3)--(3.4,-0.8);
		\draw  [black](1,-3)--(2.2,-0.8);
		\draw  [black](3.4,-3)--(2.2,-0.8);
		\draw  [black](4.8,-3)--(2.2,-0.8);
		\draw  [black](3.4,-3)--(3.4,-0.8);
		\draw  [black](4.8,-3)--(3.4,-0.8);
		\draw  [black](2.2,-3)--(4.8,-0.8);
		\draw  [black](3.4,-3)--(4.8,-0.8);
		\draw  [black](4.8,-3)--(4.8,-0.8);
		\node at (3.5,-0.3) {$r-1$};
		\node at (3.5,-3.5) {$s-1$};
		\node at (2.8, -4.1) {$\Gamma_{r,s}$};
		%
	\end{tikzpicture}

	\caption{Signed graphs $\Gamma_{r,s}$.}
	\label{f2}
\end{figure}
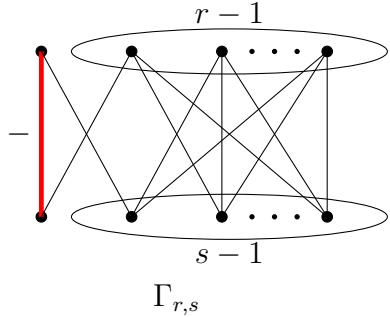

Let  $\Gamma=(G, \sigma)$ be an unbalanced signed graph. Then any negative cycle with minimum length is an induced cycle. Otherwise, suppose that $C$ be a negative cycle with minimum length but it is not induced. Then there is a chord, say  $e=uv$, 
on  $C$, so $u$ and $v$ divide $C$ into two paths, say $P$ and $Q$ connecting $u$ and $v$. As $C$ is negative,  the parity of the number of negative edges on $P$ and $Q$ are different, say $P$ contains an odd number of negative edges and $Q$ contains an even number of negative edges. If $e$ is positive, $P+e$ is a negative cycle, and if $e$ is negative, then $Q+e$ is a negative cycle, so we have a shorter negative cycle in $\Gamma$, a contradiction. Now 
suppose that $\Gamma$ is an unbalanced signed  bipartite graph with partite sizes $r$ and $s$ containing no $C_4^-$, where $r\le s$. Then it contains a negative induced cycle of length at least $6$, so it is necessary that $r\ge 3$.  

In this paper, we prove the following spectral Tur\'an type results for unbalanced signed bipartite graphs. There seems no such results for signed bipartite graphs in the literature at present.

\begin{Theorem}\label{N1}
	Let $\Gamma$ be an unbalanced signed $r$ by $s$ bipartite graph containing no $C_4^-$, where $3\le r\le s$.
	Then
	\[
	\rho(\Gamma)\le \sqrt{ \frac{1}{2}\left((r-1)(s-1)+2+\sqrt{((r-1)(s-1)+2)^2-4(2s-3)(2r-3)}\right)}, 
	\]
and equality  holds if and only if $\Gamma$ is switching isomorphic to $\Gamma_{r,s}$.
\end{Theorem}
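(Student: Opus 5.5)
Our plan has four steps: make $\lambda_1$ the target, reduce to an explicit extremal structure, compute the index of $\Gamma_{r,s}$ by an equitable quotient, and then compare the remaining structures against it.

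\textbf{Target and first reductions.} Since $\Gamma$ is bipartite, $\rho(\Gamma)=\lambda_1(\Gamma)$, so we work with the index. Take a maximizer $\Gamma$ among unbalanced $r$ by $s$ signed bipartite graphs without $C_4^-$. By the discussion preceding the theorem, $\Gamma$ contains a shortest negative cycle $C$, which is induced and has length $2k\ge 6$. Switch so that $C$ has exactly one negative edge. Let $x$ be a unit eigenvector for $\lambda_1$, chosen after a further switching so that as many entries as possible are nonnegative. We would then use standard Rayleigh-quotient moves. Adding a positive edge $uw$ with $x_ux_w>0$ does not decrease $x^TAx$. Such an edge is legal whenever it creates no $C_4^-$ and the graph stays unbalanced. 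We also use the classical fact that $\lambda_1(\Gamma)\le\lambda_1(G)$, with equality iff $\Gamma$ is balanced on the relevant component.

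\textbf{Reduction to $\Gamma_{r,s}$-like structure.} The key combinatorial observation is that no $C_4^-$ means every $4$-cycle is positive. Hence for any two vertices $u,w$ in the same part, the signed common-neighbour pattern is consistent: the products $\sigma(ua)\sigma(wa)$ are the same for all $a\in N(u)\cap N(w)$. Using this, we show the following.
\begin{enumerate}
\item The underlying graph $G$ of an extremal graph misses at least $2s-3$ and $2r-3$ edges around a single negative edge. Concretely, if $ab$ is the negative edge lying on a negative $6$-cycle $a\,b\,c\,d\,e\,f$, then $a$ and $d$ cannot be adjacent and $b$ and $e$ cannot both be fully joined.
\item One then shows that $G$ is a subgraph of the underlying graph of $\Gamma_{r,s}$ up to switching. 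Either the maximizer has a negative $6$-cycle, or lengthening the cycle only costs more edges.
\end{enumerate}
The natural decomposition is as follows. Take the negative edge $u'v'$ with $u'\in X$ and $v'\in Y$. Let $u'$ be adjacent, positively, only to one vertex $v$, and let $v'$ be adjacent, positively, only to one vertex $u$. The remaining graph is then $K_{r-1,s-1}-uv$. We must show that any legal graph has $x^TAx$ bounded by that of this structure. The plan is edge-shifting (Kelmans-type) operations, moving neighbours toward high-weight vertices. Each such operation must preserve both unbalance and the absence of $C_4^-$.

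\textbf{Computing the index of $\Gamma_{r,s}$.} Use the equitable partition $\{u'\},\{v'\},\{u\},\{v\},X\setminus\{u,u'\},Y\setminus\{v,v'\}$. In $\Gamma_{r,s}$ the relevant eigenvector is sign-symmetric after switching, so the quotient matrix is $6\times 6$. Squaring reduces the problem to the $3\times 3$ problems on each side. The characteristic polynomial in $\lambda^2$ should factor so that $\lambda^2$ is the largest root of
\[
t^2-((r-1)(s-1)+2)t+(2s-3)(2r-3)=0,
\]
which gives the claimed bound. The coefficient $(r-1)(s-1)+2$ is exactly the number of edges, consistent with the trace of $(A^2)$ restricted to one side.

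\textbf{Main obstacle.} The hard step is the structural reduction: proving that every extremal graph is switching isomorphic to $\Gamma_{r,s}$. In particular, we must rule out alternatives such as a negative edge whose endpoints have larger degrees, or several negative edges, that keep all $4$-cycles positive. We would first try to show via the consistency condition that the vertex set splits into parts where any two vertices sharing two or more common neighbours lie in the same switching class. This forces the negative $C_6$ to use vertices of small common neighbourhood, which bounds the edge count by $(r-1)(s-1)+2$. We would then compare the remaining candidates by their quotient polynomials, evaluating them at the claimed root to get strict inequalities. The equality characterization then follows from strictness in each shifting step and the Perron-type positivity of $x$ on the connected extremal graph.
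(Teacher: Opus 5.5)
Your index computation for $\Gamma_{r,s}$ uses the same six-cell equitable partition as the paper. You still need to justify that $\lambda_1(\Gamma_{r,s})$ is the largest quotient eigenvalue, since Perron--Frobenius is unavailable for a signed matrix; the paper uses $\operatorname{rank}A(\Gamma_{r,s})\le 6$ and $\sum\lambda_i^2=2m$.

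The real gap is the structural reduction, which is the substance of the theorem. What you give is a plan, and several of its pieces would fail:
\begin{itemize}
\item Your normalization is inconsistent. After switching so that $C$ has one negative edge, a further switching to make $\mathbf{x}$ nonnegative generally destroys that. With only ``as many entries as possible'' nonnegative, the moves needing $x_ux_w>0$ cannot be guaranteed.
\item Edge counting cannot settle the question. $\lambda_1^2\le m$ only gives $\sqrt{(r-1)(s-1)+2}>f(r,s)$. Moreover, for $s\ge 5$ the class contains graphs with exactly $(r-1)(s-1)+2$ edges that are not switching isomorphic to $\Gamma_{r,s}$: in the notation of Fig.~\ref{f3}, take $X_1=\emptyset$ and $Y_1,Y_2\neq\emptyset$. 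That graph has no edge joining two vertices of degree $2$, whereas $\Gamma_{r,s}$ does. So ``lengthening the cycle only costs more edges'' and ``bounds the edge count by $(r-1)(s-1)+2$'' cannot reach the conclusion.
\item Your item 1 is false as stated. $\Gamma_{r,s}$ misses exactly $r+s-3$ edges of $K_{r,s}$; $(2r-3)(2s-3)$ is the constant term of the quartic, not an edge deficit.
\item The claim that $G$ is a subgraph of the underlying graph of $\Gamma_{r,s}$ is asserted with no mechanism.
\end{itemize}

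What is missing is the chain of eigenvector arguments that pins the structure down. The paper switches once (Lemma~\ref{L+}) to get $\mathbf{x}\ge 0$, then proves from maximality that a single edge of $C$ is the only negative edge. The steps are:
\begin{enumerate}
\item Since $\Gamma_{r,s}$ is in the class, $\lambda_1\ge f(r,s)>\sqrt{(r-1)(s-2)}$.
\item Deleting three suitably chosen zero-entry vertices leaves at most $(r-1)(s-2)$ edges, so $\mathbf{x}$ has at most two zero entries.
\item A negative edge off $C$ must have both end entries zero, since otherwise deleting it raises $\lambda_1$. Hence there is at most one such edge, and a case analysis then removes it.
\item Flipping two negative edges of $C$ and applying Rayleigh's principle leaves exactly one negative edge on $C$.
\item Then $x_i>0$ for $i\ge 3$, and adding the chord $v_3v_{t-2}$ forces $t=6$.
\item Admissible edges are forced in, giving the $X_1,X_2,Y_1,Y_2$ structure.
\item The eigen-equations give $|Y_1|\le|Y_2|$, $|X_1|\le|X_2|$, $x_3\ge x_1$ and $x_6\ge x_2$. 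Relocating edges then shows $X_1=Y_1=\emptyset$.
\end{enumerate}
Your ``Kelmans-type shifting'' never names operations that simultaneously preserve unbalance, avoid creating a $C_4^-$, and increase $\mathbf{x}^\top A\mathbf{x}$. Nor does it say how to handle zero entries, several negative edges, or $t\ge 8$. Without this, both the bound and the equality characterization are unsupported.
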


\begin{Theorem}\label{N2}
Let $\Gamma$ be an  unbalanced signed bipartite graph of order $n\ge 6$ containing no $C_4^-$.
Then
\[
\rho(\Gamma)\le \begin{cases}
	\frac{1}{4}\left(n-6+\sqrt{(n-2)(n+6)}\right) & \text{if } n \text{ is even}, \\
	\sqrt{\frac{1}{8}\left(n^2-4n+11+\sqrt{(n^2-4n+11)^2-64(n-2)(n-4)}\right)} & \text{otherwise},
\end{cases}
\]and equality  holds if and only if $\Gamma$ is switching isomorphic to $\Gamma_{\left\lfloor\frac{n}{2}\right\rfloor, \left\lceil\frac{n}{2}\right\rceil}$.
\end{Theorem}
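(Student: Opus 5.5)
The plan is to derive this from Theorem~\ref{N1} by optimizing over the bipartition. Let $\Gamma$ be unbalanced, bipartite of order $n$, with no $C_4^-$, and with part sizes $r\le s$, $r+s=n$. The remark before Theorem~\ref{N1} gives $r\ge 3$. Theorem~\ref{N1} then gives $\rho(\Gamma)^2\le x_{r,s}$, where $x_{r,s}$ is the largest root of
\[
h_{r,s}(x)=x^2-(p+2)x+q,\qquad p=(r-1)(s-1),\quad q=(2r-3)(2s-3).
\]
Equality holds only for $\Gamma$ switching isomorphic to $\Gamma_{r,s}$. So it suffices to show that, for fixed $n$, $x_{r,s}$ is strictly maximized at $r=\lfloor n/2\rfloor$. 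After that, we evaluate the maximum.

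\textbf{Monotonicity step.} Suppose $s\ge r+2$, and compare $(r,s)$ with $(r+1,s-1)$. A direct computation gives
\[
\Delta p=s-r-1,\qquad \Delta q=4(s-r-1).
\]
Hence
\[
h_{r+1,s-1}(x)=h_{r,s}(x)+(s-r-1)(4-x).
\]
If $x_{r,s}>4$, then $h_{r+1,s-1}(x_{r,s})<0$, so the largest root strictly increases: $x_{r+1,s-1}>x_{r,s}$.

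\textbf{Checking $x_{r,s}>4$.} It suffices that $h_{r,s}(4)<0$, since then $4$ lies strictly between the two roots. Using $q=4rs-6n+9$ and $p=rs-n+1$, we get
\[
h_{r,s}(4)=16-4(p+2)+q=13-2n,
\]
which is negative for $n\ge 7$. For $n=6$ the only admissible pair is $r=s=3$, so there is nothing to compare.

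Iterating the monotonicity step, the maximum over admissible $(r,s)$ is attained only at the balanced pair. Combined with the equality case of Theorem~\ref{N1}, equality forces $\Gamma$ to be switching isomorphic to $\Gamma_{\lfloor n/2\rfloor,\lceil n/2\rceil}$.

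\textbf{Evaluating the maximum.} For $n=2r$, the discriminant factors as
\[
((r-1)^2+2)^2-4(2r-3)^2=(r-3)^2(r-1)(r+3).
\]
Squaring $\tfrac12\bigl(r-3+\sqrt{(r-1)(r+3)}\bigr)$ gives exactly $\tfrac12\bigl((r-1)^2+2+(r-3)\sqrt{(r-1)(r+3)}\bigr)$. Since $r\ge 3$, this yields the stated expression $\tfrac14\bigl(n-6+\sqrt{(n-2)(n+6)}\bigr)$.

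For $n$ odd, take $r=\tfrac{n-1}{2}$ and $s=\tfrac{n+1}{2}$. Then $p+2=(n^2-4n+11)/4$ and $q=(n-2)(n-4)$, and substituting gives the second formula.

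The only real obstacle is the monotonicity step. The naive approach fails because both $p$ and $q$ increase when the bipartition is balanced, so their effects on $x_{r,s}$ pull in opposite directions. The perturbation identity for $h$ together with the bound $x_{r,s}>4$ resolves this.
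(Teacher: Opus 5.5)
Your proof is correct. It shares the paper's skeleton: apply Theorem~\ref{N1} to the given bipartition, then optimize over $r$ with $r+s=n$ fixed. The key optimization step, however, goes by a genuinely different route.

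The paper simply invokes Lemma~\ref{comp}~(iii). Its proof differentiates $g(t)=(t-1)(s-1)+2+\sqrt{\cdots}$ in $t$ while holding $s$ constant; the leading term $s-1$ of $g'(t)$ shows this. That establishes monotonicity in $r$ for fixed $s$, not along the line $r+s=n$, which is what Theorem~\ref{N2} needs. So the paper's argument has a gap exactly where you located the difficulty: moving toward balance raises both $p$ and $q$, and these have opposite effects on the largest root.

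Your discrete comparison of $(r,s)$ with $(r+1,s-1)$ closes this gap cleanly and without calculus. The identity $h_{r+1,s-1}(x)=h_{r,s}(x)+(s-r-1)(4-x)$, combined with $h_{r,s}(4)=13-2n<0$, gives strict monotonicity along fixed $n$ whenever a comparison is actually needed ($s\ge r+2$ forces $n\ge 8$).

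You also supply what the paper leaves as ``the result follows''. In the even case you give the factorization $((r-1)^2+2)^2-4(2r-3)^2=(r-3)^2(r-1)(r+3)$, which produces the closed form $\frac14\bigl(n-6+\sqrt{(n-2)(n+6)}\bigr)$. In the odd case you carry out the substitution explicitly.

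Finally, you apply the inequality of Theorem~\ref{N1} to an arbitrary $\Gamma$ rather than to a maximizer, which is slightly cleaner. You should state the converse direction of the equality case explicitly, in one line: $\Gamma_{\lfloor n/2\rfloor,\lceil n/2\rceil}$ attains the bound, by the equality case of Theorem~\ref{N1} or by Lemma~\ref{comp}~(i).
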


\section{Preliminaries}

For a signed graph $\Gamma=(G,\sigma)$ with $U\subset V(G)$,  $\Gamma-U$ denotes the signed graph $(G-U,\sigma_1)$ with $\sigma_1(e)=\sigma(e)$ for any $e\in E(G-U)$, where $G-U$ is the graph obtained from $G$ by removing the vertices in $U$ (and every edge incident to some vertex of $S$).
For a vertex $u$ in $\Gamma$, we denote by $N_\Gamma(u)$ the neighborhood of $u$ in $\Gamma$.

For a signed graph $\Gamma=(G,\sigma)$ of order $n$ with $E'\subset E(G)$, $\Gamma-E'$ denotes the signed graph $(G-E', \sigma')$ with $\sigma'(e)=\sigma(e)$ for any $e\in E(G)\setminus E'$, where $G-E'$  is the graph obtained from $G$ by removing the edges in $E'$.
If $E'=\{uv\}$, then we write $\Gamma-uv$ for $\Gamma-\{uv\}$.
If $E''\subset E(K_n)\setminus E(G)$, then $\Gamma+E''=(G+E'', \sigma'')$ with $\sigma''(e)=\sigma(e)$ for any $e\in E(G)$ and  $\sigma''(e)=1$ for any $e\in E''$,  where $G+E''$  is the graph obtained from $G$ by adding  the edges in $E''$.
If $E''=\{uv\}$, then we write $\Gamma+uv$ for $\Gamma+\{uv\}$.

Let $B$ be an $n\times n$ matrix whose rows and columns are indexed by elements in $X=[n]:=\{1, \dots, n\}$.  Let $\pi=\{X_1, \dots, X_t\}$ be a partition of $X$. For $1\le i,j\le t$, let $B_{ij}$ be the submatrix of $B$ whose rows and columns are indexed by elements of $X_i$ and $X_j$. The partition $\pi$ is equitable if the row sum of each $B_{ij}$ is a constant for $1\le i,j\le t$. The $t\times t$ matrix whose $(i,j)$-entry is the average row sum of $B_{ij}$ with $1\le i,j\le t$ is called the quotient matrix of $B$.

\begin{Lemma}\label{QM}\cite[Lemma 2.3.1]{BH}
Let $B$ be a real symmetric matrix. Then the spectrum of the quotient matrix of $B$ with respect to an equitable partition is contained in the spectrum of $B$.
\end{Lemma}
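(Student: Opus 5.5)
The plan is to encode the partition by its characteristic matrix and show that the quotient matrix is an intertwiner of $B$, so that eigenvectors of the quotient lift to eigenvectors of $B$. Let $\pi=\{X_1,\dots,X_t\}$ be the equitable partition (all parts nonempty), and let $S$ be the $n\times t$ matrix whose $j$-th column is the indicator vector of $X_j$. Let $Q=(q_{ij})$ be the quotient matrix, so $q_{ij}$ is the constant row sum of $B_{ij}$.

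\textbf{Step 1: the intertwining identity $BS=SQ$.} Take a row index $u\in X_i$ and a column $j$. The $(u,j)$-entry of $BS$ is $\sum_{v\in X_j} b_{uv}$, which is the $u$-th row sum of $B_{ij}$. By equitability this equals $q_{ij}$ for every $u\in X_i$. The $(u,j)$-entry of $SQ$ is also $q_{ij}$, because row $u$ of $S$ is the $i$-th standard basis vector. Hence $BS=SQ$.

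\textbf{Step 2: lifting eigenvectors.} Suppose $Qx=\theta x$ with $x\neq 0$. Then
\[
B(Sx)=SQx=\theta\, Sx .
\]
The columns of $S$ have disjoint nonempty supports, so they are linearly independent and $S$ is injective. Thus $Sx\neq 0$, and $\theta$ is an eigenvalue of $B$. Since $B$ is real symmetric, $\theta$ is in fact real.

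\textbf{Step 3: multiplicities.} To get containment of spectra as multisets, I must show that $Q$ is diagonalizable, so that its algebraic and geometric multiplicities agree. Let $D=S^{T}S=\mathrm{diag}(|X_1|,\dots,|X_t|)$, which is positive definite. From Step 1, $S^{T}BS=S^{T}SQ=DQ$, and the left side is symmetric, so $DQ$ is symmetric. Therefore
\[
D^{1/2}QD^{-1/2}=D^{-1/2}(DQ)D^{-1/2}
\]
is real symmetric. Hence $Q$ is similar to a symmetric matrix and is diagonalizable with real eigenvalues. If $\theta$ has multiplicity $k$ in $Q$, choose $k$ independent eigenvectors for $\theta$. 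Injectivity of $S$ sends them to $k$ independent eigenvectors of $B$ for $\theta$, so $\theta$ has multiplicity at least $k$ in $B$.

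I expect Step 3 to be the only point needing care, since Steps 1–2 give only set containment; the symmetrization by $D^{1/2}$ is what settles it.
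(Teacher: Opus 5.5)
Your proof is correct and is the standard argument behind the cited result; the paper gives no proof of its own, only the reference to Brouwer--Haemers. In that argument, the intertwining identity $BS=SQ$ for the characteristic matrix $S$ lifts each eigenvector $x$ of $Q$ to a nonzero eigenvector $Sx$ of $B$, and your Step~3 (symmetrizing $Q$ as $D^{1/2}QD^{-1/2}$ to obtain containment with multiplicities) is a valid refinement that goes slightly beyond what the paper actually uses.
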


\begin{Lemma}\label{under}\cite{St1}
For a signed graph $\Gamma=(G,\sigma)$, we have $\lambda_1(\Gamma)\le\lambda_1(G)$ with equality when it is connected if and only if $\Gamma$ is balanced.
\end{Lemma}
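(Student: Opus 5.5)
The plan is a Rayleigh quotient comparison between $A(\Gamma)$ and $A(G)$, followed by a Perron--Frobenius argument for the equality case. Throughout, the key observation is that switching in a set $U$ replaces $A(\Gamma)$ by $DA(\Gamma)D$. Here $D$ is the diagonal matrix with $D_{uu}=-1$ for $u\in U$ and $D_{uu}=1$ otherwise. Since $D=D^{-1}$, this is a similarity, so switching equivalent signed graphs have the same spectrum.

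For the inequality, I take a unit eigenvector $x$ of $A(\Gamma)$ for $\lambda_1(\Gamma)$. I then write
\[
\lambda_1(\Gamma)=x^{T}A(\Gamma)x=2\sum_{uv\in E(G)}\sigma(uv)x_ux_v\le 2\sum_{uv\in E(G)}|x_u||x_v|=|x|^{T}A(G)|x|\le \lambda_1(G).
\]
Here $|x|$ denotes the entrywise absolute value, which is again a unit vector. The last step is the Rayleigh principle for the symmetric matrix $A(G)$.

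For the equality case, assume $G$ is connected. If $\Gamma$ is balanced, then by \cite[Lemma 5.3]{Z2} it is switching equivalent to $(G,+)$. By the similarity remark, $\lambda_1(\Gamma)=\lambda_1(G)$. Conversely, suppose $\lambda_1(\Gamma)=\lambda_1(G)$. Then both inequalities in the display are equalities.
\begin{itemize}
\item Equality in the Rayleigh step means $|x|$ is an eigenvector of $A(G)$ for $\lambda_1(G)$. Since $G$ is connected, Perron--Frobenius gives that $|x|$ is a positive multiple of the Perron vector, so every $x_u\neq 0$.
\item Equality in the first step, with all $x_u\ne 0$, forces $\sigma(uv)x_ux_v=|x_ux_v|>0$ for every edge. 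That is, $\sigma(uv)=\operatorname{sgn}(x_u)\operatorname{sgn}(x_v)$.
\end{itemize}
Switching in $U=\{u: x_u<0\}$ then makes every edge positive. Hence $\Gamma$ is switching equivalent to $(G,+)$ and is balanced. This can also be seen directly: the sign of every cycle is the product of $\operatorname{sgn}(x_u)^2=1$ over its vertices, so every cycle is positive.

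The main point requiring care is the equality case, specifically ensuring that no entry of $x$ vanishes. This is exactly where connectivity and the strict positivity of the Perron vector are needed. Without it, an edge with $x_ux_v=0$ would carry no information about its sign.
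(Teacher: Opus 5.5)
Your proof is correct. The paper gives no argument of its own for this lemma (it is simply quoted from \cite{St1}), and yours is the standard proof: the chain $\lambda_1(\Gamma)=x^{\top}A(\Gamma)x\le |x|^{\top}A(G)|x|\le\lambda_1(G)$, then, in the equality case for connected $G$, Perron--Frobenius gives $x_u\neq 0$ for all $u$, hence $\sigma(uv)=\operatorname{sgn}(x_u)\operatorname{sgn}(x_v)$ on every edge, which makes every cycle positive (equivalently, switching at $\{u: x_u<0\}$ yields $(G,+)$).
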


\begin{Lemma}\label{SE}\cite{Z1}
Two signed graphs with the same underlying graph are switching equivalent if and only if they have the same class of positive cycles.
\end{Lemma}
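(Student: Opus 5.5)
The plan is to prove both directions directly from the definition of switching, reducing the converse to the statement that a signed graph with no negative cycle switches to the all-positive one (the fact cited from \cite[Lemma 5.3]{Z2} in the introduction; I will also prove it here).

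\emph{Necessity.} Switching in $U$ multiplies $\sigma(e)$ by $-1$ exactly for the edges $e$ of the cut $[U,V(G)\setminus U]$. A cycle $C$ leaves $U$ as often as it enters it, so it meets this cut in an even number of edges. Hence the sign of $C$, which is the product of the signs of its edges, is unchanged. So switching equivalent signed graphs on the same underlying graph have the same positive cycles.

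\emph{Sufficiency.} Let $\sigma_1,\sigma_2$ be signatures on $G$ with the same positive cycles, and set $\tau(e)=\sigma_1(e)\sigma_2(e)$. For every cycle $C$, $\tau(C)=\sigma_1(C)\sigma_2(C)=1$, since $\sigma_1(C)=\sigma_2(C)$. So $(G,\tau)$ is balanced.

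Next I show that $(G,\tau)$ is switching equivalent to $(G,+)$.
\begin{enumerate}
\item In each component of $G$, fix a spanning tree $T$ and a root $w$.
\item Let $U$ be the set of vertices $x$ whose $T$-path from $w$ has $\tau$-sign $-1$.
\item A tree edge $xy$ changes the path sign exactly when $\tau(xy)=-1$. This happens exactly when $xy$ crosses the cut of $U$, so after switching in $U$ every tree edge is positive.
\item A non-tree edge $e$ closes a fundamental cycle with $T$. That cycle is still positive after switching, by necessity, and all its tree edges are now positive, so $e$ is positive too.
\end{enumerate}
Thus switching $(G,\tau)$ in $U$ gives $(G,+)$.

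Finally, switching in $U$ commutes with multiplying signatures edgewise. Since switching $\tau=\sigma_1\sigma_2$ in $U$ gives the all-positive signature, switching $\sigma_1$ in $U$ gives exactly $\sigma_2$. This proves the lemma.

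The only step needing care is the consistency check for the non-tree edges in step 4, which is where the absence of negative cycles is used. The rest is direct bookkeeping.
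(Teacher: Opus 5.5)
Your proof is correct and self-contained. Necessity follows from cut parity. For sufficiency, the product signature $\sigma_1\sigma_2$ is balanced, and your spanning-forest potential switches it to $(G,+)$, so the same set $U$ carries $\sigma_1$ to $\sigma_2$. The paper does not prove this lemma at all; it only quotes it from Zaslavsky. Your argument is the standard proof, essentially Harary's balance theorem applied to $\sigma_1\sigma_2$, so there is no paper proof to compare against.
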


\begin{Lemma}\label{same}\cite{Za}
Switching equivalent signed graphs have the same  spectrum.
\end{Lemma}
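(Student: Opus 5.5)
The plan is to realise switching as a similarity transformation of the adjacency matrix. Let $\Gamma=(G,\sigma)$ and let $U\subset V(G)$. I would define the diagonal matrix $D_U=\mathrm{diag}(d_v)_{v\in V(G)}$ with $d_v=-1$ if $v\in U$ and $d_v=1$ otherwise. Note that $D_U^2=I$, so $D_U^{-1}=D_U=D_U^{T}$, and $D_U$ is orthogonal.

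The key step is to check entrywise that $A(\Gamma(U))=D_UA(\Gamma)D_U$. The $(u,v)$-entry of $D_UA(\Gamma)D_U$ is $d_ua_{uv}d_v$. If $uv\notin E(G)$, both sides are $0$. If $uv\in E(G)$, then $d_ud_v=-1$ exactly when one of $u,v$ lies in $U$ and the other does not. These are precisely the edges whose sign is reversed by switching in $U$. Hence $d_ua_{uv}d_v$ equals the sign of $uv$ in $\Gamma(U)$, which gives the identity.

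Consequently $A(\Gamma(U))$ is orthogonally similar to $A(\Gamma)$. The two matrices therefore have the same characteristic polynomial, and so the same spectrum with multiplicities. Since switching equivalence means that one signed graph is obtained from the other by a single switching in some subset $U$, this proves the lemma.

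There is no real obstacle here; the only care needed is the sign bookkeeping in the entrywise verification. As a consequence, switching isomorphic signed graphs are also cospectral, because isomorphism amounts to conjugation by a permutation matrix. The same similarity argument also gives $\rho(\Gamma)=\lambda_1(\Gamma)$ for sign-symmetric $\Gamma$, as used in the introduction.
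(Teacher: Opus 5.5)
Your proof is correct and is the standard argument. The paper gives no proof of its own and simply cites Zaslavsky, where the fact is proved in the same way: switching in $U$ conjugates $A(\Gamma)$ by the diagonal $\pm1$ matrix $D_U$, so the two adjacency matrices are orthogonally similar and hence cospectral.
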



\begin{Lemma}\label{L+}\cite{Stan,SL}
A signed graph $\Gamma$ is switching equivalent to a signed graph $\Gamma^*$ such that $\lambda_1(\Gamma^*)$ has a non-negative eigenvector.
\end{Lemma}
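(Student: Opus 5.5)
The statement immediately above is Lemma~\ref{L+}, which is cited from \cite{Stan,SL}. I would prove it by a sign-flipping argument on a unit eigenvector, with no need for Perron--Frobenius.

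Let $A=A(\Gamma)$ and let $x$ be a real unit eigenvector for $\lambda_1(\Gamma)$. By the Rayleigh principle,
\[
\lambda_1(\Gamma)=x^TAx=\max_{\|y\|=1}y^TAy .
\]
Put $U=\{u : x_u<0\}$ and let $D$ be the diagonal matrix with $D_{uu}=-1$ for $u\in U$ and $D_{uu}=1$ otherwise. Switching in $U$ gives the signed graph $\Gamma^*=\Gamma(U)$, and its adjacency matrix is exactly $A(\Gamma^*)=DAD$. Indeed, $DAD$ changes the sign of the $(u,v)$-entry precisely when exactly one of $u,v$ lies in $U$, which matches the definition of switching.

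Now set $z=Dx$. Its entries are $z_u=|x_u|\ge 0$. It is a unit vector, and
\[
DAD\,z=DAx=\lambda_1(\Gamma)Dx=\lambda_1(\Gamma)z .
\]
Since $DAD$ is orthogonally similar to $A$, we have $\lambda_1(\Gamma^*)=\lambda_1(\Gamma)$; this is also Lemma~\ref{same}. Hence $z$ is a non-negative eigenvector of $A(\Gamma^*)$ for $\lambda_1(\Gamma^*)$, which proves the lemma.

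The only step that needs checking is the identification $A(\Gamma(U))=DAD$, that is, that the switching operation coincides with diagonal $\pm1$ similarity. This follows entrywise from $(DAD)_{uv}=D_{uu}a_{uv}D_{vv}$, so I do not expect any real obstacle.
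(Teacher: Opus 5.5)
Your proof is correct. Switching in $U=\{u : x_u<0\}$ is conjugation by the signature matrix $D$, so $Dx=(|x_u|)_u$ is a non-negative eigenvector of $A(\Gamma(U))=DAD$ for the same index. The paper gives no proof of its own and only cites this lemma; your argument (switch at the negative coordinates of an index eigenvector) is essentially the standard proof in the cited references.
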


Let $\Gamma$ be a signed graph on vertices $v_1,\dots, v_n$.  A real vector $\mathbf{x}=(x_1, \dots, x_n)^\top$ is viewed as a function on $\{v_1, \dots, v_n\}$ that maps vertex $v_i$  to $x_i$ , i.e.,
$\mathbf{x}(v_i)=x_i$, for $i=1,\dots, n$.

\begin{Lemma} \cite{BS} \label{add}  Let $\Gamma$ be a signed graph on vertices $v_1,\dots, v_n$ and  $\mathbf{x}=(x_1, \dots, x_n)^\top$  be an eigenvector associated
with the index of $\Gamma$. For vertices $v_k, v_\ell$, if $x_kx_\ell\ge 0$, at least one of $x_k,x_\ell$ is nonzero, and $v_k$ and $v_\ell$ are not adjacent ($v_kv_\ell$  is a negative edge, respectively), then for $\Gamma'=\Gamma+v_kv_\ell$ ($\Gamma'=\Gamma-v_kv_\ell$ or
$\Gamma'$ is  obtained from $\Gamma$ by reversing the sign of $v_kv_\ell$, respectively), we have $\lambda_1(\Gamma')>\lambda_1(\Gamma)$.
\end{Lemma}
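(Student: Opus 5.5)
The plan is to use the Rayleigh quotient characterization of the index together with a perturbation argument that rules out equality. Normalize $\mathbf{x}$ so that $\mathbf{x}^\top\mathbf{x}=1$. Then $\lambda_1(\Gamma)=\mathbf{x}^\top A(\Gamma)\mathbf{x}$ and $\lambda_1(\Gamma')=\max_{\|\mathbf{y}\|=1}\mathbf{y}^\top A(\Gamma')\mathbf{y}$.

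In each of the three operations, the matrix $A(\Gamma')-A(\Gamma)$ is supported only on the entries $(k,\ell)$ and $(\ell,k)$. Write it as $c(E_{k\ell}+E_{\ell k})$, where $E_{ij}$ denotes the matrix unit. The constant $c$ is as follows:
\begin{itemize}
\item adding the positive edge $v_kv_\ell$ gives $c=1$;
\item deleting the negative edge $v_kv_\ell$ changes the entry from $-1$ to $0$, so $c=1$;
\item reversing the sign of the negative edge changes $-1$ to $1$, so $c=2$.
\end{itemize}
In every case $c>0$, and
\[
\lambda_1(\Gamma')\ \ge\ \mathbf{x}^\top A(\Gamma')\mathbf{x}=\mathbf{x}^\top A(\Gamma)\mathbf{x}+2c\,x_kx_\ell=\lambda_1(\Gamma)+2c\,x_kx_\ell\ \ge\ \lambda_1(\Gamma),
\]
using the hypothesis $x_kx_\ell\ge 0$.

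The only real point is strictness. Suppose $\lambda_1(\Gamma')=\lambda_1(\Gamma)=:\lambda$. Then the chain above forces $\mathbf{x}^\top A(\Gamma')\mathbf{x}=\lambda_1(\Gamma')$. Hence $\mathbf{x}$ attains the maximum of the Rayleigh quotient of the symmetric matrix $A(\Gamma')$, and so $\mathbf{x}$ is an eigenvector: $A(\Gamma')\mathbf{x}=\lambda\mathbf{x}$. This follows by expanding $\mathbf{x}$ in an orthonormal eigenbasis of $A(\Gamma')$.

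Subtracting $A(\Gamma)\mathbf{x}=\lambda\mathbf{x}$ gives $c(E_{k\ell}+E_{\ell k})\mathbf{x}=\mathbf{0}$. Reading the $k$-th coordinate gives $c\,x_\ell=0$, and reading the $\ell$-th coordinate gives $c\,x_k=0$. Since $c\neq 0$, both $x_k=x_\ell=0$, contradicting the assumption that at least one of them is nonzero. Therefore $\lambda_1(\Gamma')>\lambda_1(\Gamma)$.

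I expect no serious obstacle. The one step needing care is justifying that a unit vector achieving the Rayleigh maximum must be an eigenvector for $\lambda_1(\Gamma')$; this is the standard spectral-decomposition argument.
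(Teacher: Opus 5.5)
Your proof is correct and is the standard argument: Rayleigh's principle gives $\lambda_1(\Gamma')\ge\lambda_1(\Gamma)+2c\,x_kx_\ell\ge\lambda_1(\Gamma)$, and equality would make $\mathbf{x}$ an eigenvector of $A(\Gamma')$, so reading the $k$-th and $\ell$-th coordinates of $c(E_{k\ell}+E_{\ell k})\mathbf{x}=\mathbf{0}$ forces $x_k=x_\ell=0$. The paper cites this lemma from \cite{BS} without proof, but it uses exactly this pattern (Rayleigh bound, equality forces a common eigenvector, then compare coordinates) repeatedly in its own claims, for example in Claims \ref{one} and \ref{non}.
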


%
%
%

\begin{Lemma}\label{comp}
Let  $r$ and $s$ be two integers with $3\le r\le s$ and $s\ge 4$, and let
\[
f(r,s)=\sqrt{ \frac{1}{2}\left((r-1)(s-1)+2+\sqrt{((r-1)(s-1)+2)^2-4(2r-3)(2s-3)}\right)}.
\]
Then 
\begin{enumerate}
\item[(i)]  $\lambda_1(\Gamma_{r, s})=f(r,s)$.

\item[(ii)] $f(r,s)>\sqrt{(r-1)(s-2)}$.

\item[(iii)] $s=n-r$ and $n$ is fixed,  
then $f(r,s)$ is strictly increasing for $r\le s$.
\end{enumerate}
\end{Lemma}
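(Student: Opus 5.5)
We would prove the three parts in order, with (i) by an equitable partition, (ii) by evaluating a quadratic, and (iii) by implicit differentiation in $r$.

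For (i), we label $\Gamma_{r,s}$ explicitly. The part of size $r$ consists of $u$, a set $W$ of $r-2$ further vertices of the original $K_{r-1,s-1}$, and the new path vertex $b$. The part of size $s$ consists of $v$, a set $Z$ of $s-2$ further vertices, and the new path vertex $a$. The edges are: $u\sim Z\cup\{a\}$, $W\sim Z\cup\{v\}$, $b\sim v$, and $a\sim b$, with $ab$ the only negative edge. The partition $\{u\},W,\{b\},\{v\},Z,\{a\}$ of $A(\Gamma_{r,s})$ is equitable, so Lemma \ref{QM} says the $6\times 6$ quotient matrix $Q$ contributes its eigenvalues to the spectrum.

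We also need that every other eigenvalue is $0$. Take a vector that sums to zero on $W$ and vanishes elsewhere; since each vertex outside $W$ is adjacent to all of $W$ or to none of it, $A$ maps this vector to $0$. The same holds for $Z$. These vectors span a space of dimension $(r-3)+(s-3)$, which is the complement of the block-constant vectors. Hence the spectrum of $\Gamma_{r,s}$ is that of $Q$ together with zeros, and $\lambda_1(\Gamma_{r,s})$ is the largest eigenvalue of $Q$.

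Since $\Gamma_{r,s}$ is bipartite, $\det(\lambda I-Q)$ is a polynomial in $\mu=\lambda^2$ of degree $3$. Write $N=(r-1)(s-1)$ and $P=(2r-3)(2s-3)$. We expect the cubic to factor as $(\mu-c)\,q(\mu)$ with
\[
q(\mu)=\mu^2-(N+2)\mu+P .
\]
To finish (i) we check that the larger root of $q$ exceeds $c$ (and is at least $0$), so that $\lambda_1=f(r,s)$. The main obstacle is computing this determinant and confirming the factorization. If it does not factor cleanly, we would instead show directly that $f(r,s)^2$ is a root of the cubic and is its largest root, by checking that the cubic is positive beyond it.

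For (ii), put $t=(r-1)(s-2)$. It suffices to show $q(t)<0$, since then $t$ lies strictly between the two roots of $q$, and so $t<f(r,s)^2$. A direct computation gives
\[
q(t)=t\,(t-N-2)+P=-(r-1)(r+1)(s-2)+(2r-3)(2s-3).
\]
So we must show $(r^2-1)(s-2)>(2r-3)(2s-3)$ for $3\le r\le s$ and $s\ge 4$. Both sides are linear in $s$. The slope of the left side is $r^2-1$, which is at least the slope $2(2r-3)$ of the right side. The inequality holds at $s=4$ (for $r=3$ it reads $16>15$), and for $s\ge 4$ we compare at the smallest admissible $s$ and then use the slopes.

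For (iii), treat $r$ as a real variable with $s=n-r$ and $r<n/2$. Let $\mu(r)=f(r,n-r)^2$ be the larger root of $q$. Implicit differentiation gives
\[
\mu'(r)=-\frac{\partial_r q}{\partial_\mu q},
\]
and $\partial_\mu q>0$ at the larger root. Using $\partial_r N=n-2r$ and $\partial_r P=4(n-2r)$, we get
\[
\partial_r q=-\mu\,(n-2r)+4(n-2r)=(n-2r)(4-\mu).
\]
By (ii), $\mu>(r-1)(s-2)\ge 4$, so $\partial_r q<0$ whenever $r<n/2$. Hence $\mu'(r)>0$, and $f(r,n-r)$ is strictly increasing on $r\le s$.
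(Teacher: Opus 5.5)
Your proposal is correct. Parts (i) and (ii) follow the paper's lines with cleaner bookkeeping, while (iii) takes a genuinely different route.

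In (i) you use the same six-block equitable partition as the paper. The paper bounds $\mathrm{rank}\,A(\Gamma_{r,s})\le 6$ and then uses $\sum\lambda_i^2=2m$ to show the two leftover eigenvalues vanish. You instead exhibit the $(r-3)+(s-3)$ zero-sum vectors on $W$ and $Z$ as an explicit null space complementary to the block-constant vectors, which gives the full spectrum directly. The factorization you were unsure of holds with $c=0$. In the off-diagonal block of $Q$, the row of $b$ is the row of $W$ minus the row of $u$, so that block is singular. The trace and the sum of principal $2\times 2$ minors of the resulting $3\times 3$ product are then $N+2$ and $P$.

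In (ii) your computation is the paper's in disguise: the paper's quantity $((N+2)^2-4P)-(2t-N-2)^2$ equals $-4q(t)$. Your base case is incomplete as written, since you only checked $r=3$, and for $r\ge 4$ the smallest admissible $s$ is $r$. It is best fixed by noting that at $s=4$ the difference of the two sides is $2r^2-10r+13>0$ for every real $r$. Your slope comparison $r^2-1>2(2r-3)$ then covers all $s\ge 4$.

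For (iii), the paper differentiates $g(t)$ with $s$ held fixed (its $g'(t)$ starts with $s-1$). That only shows $f$ increases in $r$ for fixed $s$. It does not address the constraint $r+s=n$, under which $s$ decreases as $r$ grows. Your implicit differentiation handles exactly this trade-off: $\partial_r N=n-2r$ and $\partial_r P=4(n-2r)$, hence $\partial_r q=(n-2r)(4-\mu)$. So here your argument is the one that actually establishes the statement.

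To make (iii) airtight, add three remarks:
\begin{itemize}
\item You apply (ii) at non-integer $r$. This is legitimate because $(r^2-1)(s-2)>(2r-3)(2s-3)$ holds for all real $r$ and real $s\ge 4$.
\item You need $s\ge 4$ throughout. On $[3,n/2]$ we have $s=n-r\ge n/2\ge 4$ once $n\ge 8$, and for $n\le 7$ there is only one admissible $r$.
\item At the larger root, $\partial_\mu q=\sqrt{(N+2)^2-4P}>0$. This again follows from (ii), because $q(t)<0$ forces two distinct real roots.
\end{itemize}
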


\begin{proof} Let $V_1, V_2$ be the bipartite sets of $K_{r-1, s-1}$  with  $|V_1|=r-1$ and $|V_2|=s-1$. 
Recall that  $\Gamma_{r,s}$  is  obtainable from a copy of $(K_{r-1, s-1},+)$  by deleting an edge $uv$  with $u\in V_1$ and $v\in V_2$ and adding a path of length three, say $uv_1u_1v$ 
to connect $u$ and $v$, where $u_1, v_1$ lie outside  $(K_{r-1, s-1},+)$ and $u_1v_1$  is the only negative edge. 
Evidently,  $r-2$ rows and $s-2$ rows of $A(\Gamma_{r, s})$ corresponding  to the vertices in $V_1\setminus\{u\}$ and in $V_2\setminus\{v\}$ respectively are equal. So the rank of $A(\Gamma_{r, s})$ is at most $6$, that is,  $0$ is an eigenvalue of $\Gamma_{r,s}$  with multiplicity at least $n-6$.

On the other hand, we partition $V(\Gamma_{r, s})$ as $\{u_1\}\cup \{u\}\cup (V_1\setminus\{u\})\cup
\{v_1\}\cup \{v\}\cup (V_2\setminus\{v\}\})$. Then  $A(\Gamma_{r, s})$ can be partitioned correspondingly. It is easy to see that 
this partition is equitable. The quotient matrix of $A(\Gamma_{r, s})$ with respect to this partition is 
\[
\begin{pmatrix}
0 & 0 & 0 & -1 & 1 & 0 \\
0 & 0 & 0 & 1 & 0 & s-2 \\
0 & 0 & 0 & 0 & 1 & s-2 \\
-1 & 1 & 0 & 0 & 0 & 0 \\
1 & 0 & r-2 & 0 & 0 & 0 \\
0 & 1 & r-2 & 0 & 0 & 0 
\end{pmatrix}.
\]
As is easily verified,  its characteristic polynomial is
\[
f(x)=x^2\left(x^4 -((r-1)(s-1)+2)x^2+(2r-3)(2s-3)\right).
\]
By Lemma \ref{QM},  the four nonzero roots of $f(x)=0$, which  are 
\[
\pm\sqrt{ \frac{1}{2}\left((r-1)(s-1)+2\pm\sqrt{((r-1)(s-1)+2)^2-4(2r-3)(2s-3)}\right)},
\]
are eigenvalues of $\Gamma_{r, s}$. By the above proof, $0$ is an eigenvalue of $\Gamma_{r,s}$  with multiplicity at least $n-6$. So we find $n-2$ eigenvalues of  $\Gamma_{r, s}$. Denote by $m$  the number of edges of $\Gamma_{r, s}$.
The remaining two eigenvalues must be $0$ as  the sum of the square of the nonzero roots of $f(x)=0$ is $2(r-1)(s-1)+4=2m$, and it is easily seen that 
 the sum of the square of all eigenvalues is equal to  $2m$. Thus $\lambda_1(\Gamma_{r, s})$ is just the largrst root $f(r,s)$ of $f(x)=0$. This proves (i).

Next, we show (ii). Note that $f(s,t)>\sqrt{(r-1)(s-1)}$ is equivalent to 
\[
(r-1)(s-1)+2+\sqrt{((r-1)(s-1)+2)^2-4(2r-3)(2s-3)}>2(r-1)(s-1),
\]
i.e.,
\[
\sqrt{((r-1)(s-1)+2)^2-4(2r-3)(2s-3)}>2(r-1)(s-2)-(r-1)(s-1)-2,
\]
which follows from the fact that
\begin{align*}
	& \quad ((r-1)(s-1)+2)^2-4(2r-3)(2s-3)-(2(r-1)(s-2)-(r-1)(s-1)-2)^2\\
	& = 4\left((s-2)r^2-2(2s-3)r+5s-7\right)\\
	& \ge 4(9(s-2)-6(2s-3)+5s-7)\\
	& = 2s-7\\
	& > 0.
\end{align*}
This proves (ii).

Suppose that $r+s=n$ is fixed. 
For $3\le t\le s$ and $s\ge 4$, let
\[
g(t)=(t-1)(s-1)+2+\sqrt{((t-1)(s-1)+2)^2-4(2s-3)(2t-3)}. 
\]
Then 
\[
\lambda_1(\Gamma_{r, s})=\sqrt{\frac{1}{2}g(r)}=f(r, s),
\]
As
\begin{align*}
	g'(t)& =s-1+\frac{((t-1)(s-1)+2)(s-1)-4(2s-3)}{\sqrt{((t-1)(s-1)+2)^2-4(2s-3)(2t-3)}}\\
	& =s-1+\frac{(s-1)((t-1)(s-1)-6)+4}{\sqrt{((t-1)(s-1)+2)^2-4(2s-3)(2t-3)}}\\
	& >0, 
\end{align*}
$g(t)$ is strictly increasing for $3\le t\le s$, so $f(r,s)=\sqrt{\frac{1}{2}g(r)}$ is strictly increasing for $r\le s$. This is (iii).
\end{proof}

\section{Proof of Theorem \ref{N1}}

\begin{proof}[Proof of Theorem~\ref{N1}]

Let $\mathbf{\Gamma}_{r, s}$ be the class of unbalanced signed $r$ by $s$ bipartite graphs containing no $C_4^-$.
Let $\Gamma=(G,\sigma)$ be a graph in $\mathbf{\Gamma}_{r, s}$ that maximizes the spectral radius. 
As $G$ is bipartite, $\rho(\Gamma)=\lambda_1(\Gamma)$. 
Let $X$ and $Y$ be the bipartite sets of $G$ with $|X|=r$ and $|Y|=s$.

Assume that  $V(G)=\{v_1,\dots, v_{n}\}$  with $n=r+s$ and $C=v_1v_2\dots v_tv_1$ is a shortest  negative induced cycle in $\Gamma$ with $v_1\in X$.  As $G$ is bipartite and $\Gamma$ contains no $C_4^-$, $t$ is even and $t\ge 6$.

If $s=3$, then $t=6$, so $\Gamma$ is a negative $C_6$, which is switching isomorphic to $\Gamma_{3, 3}$. By a direct calculation, $\rho(\Gamma)=\sqrt{3}$.  So the result follows for $s=3$. 

Suppose in the following that $s\ge 4$.  
By Lemma \ref{L+},  $\Gamma$ is switching equivalent to
a signed graph $\Gamma^*=(G,\sigma^*)$  such that $\lambda_1(\Gamma^*)$ has a non-negative unit eigenvector $\mathbf{x}=(x_1, \dots, x_n)^\top$. 
By Lemma \ref{SE}, $\Gamma^*$ is also an unbalanced signed $r$ by $s$ bipartite graph  containing no $C_4^-$, and  $C$ is still a shortest negative induced cycle in $\Gamma^*$.  
By Lemmas \ref{same} and \ref{comp} (ii) and the fact that $\Gamma_{r, s}\in \mathbf{\Gamma}_{r, s}$,  
\[
\lambda_1(\Gamma^*)=\lambda_1(\Gamma)\ge \lambda_1\left(\Gamma_{r, s}\right)>\sqrt{(r-1)(s-2)}.
\]

\begin{Claim}\label{nz}
	If $v_av_b$ is a negative edge of $\Gamma^*$ outside $C$, then $x_a=x_b=0$.
\end{Claim}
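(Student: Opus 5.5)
We argue by contradiction and use the maximality of $\Gamma$ together with Lemma \ref{add}. Suppose $v_av_b$ is a negative edge of $\Gamma^*$ not on $C$ and that $x_a,x_b$ are not both zero. Since $\mathbf{x}$ is non-negative, $x_ax_b\ge 0$ and at least one of $x_a,x_b$ is nonzero. Let $\Gamma'$ be the signed graph obtained from $\Gamma^*$ by reversing the sign of $v_av_b$. By Lemma \ref{add}, $\lambda_1(\Gamma')>\lambda_1(\Gamma^*)=\lambda_1(\Gamma)$.

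It remains to show that $\Gamma'\in\mathbf{\Gamma}_{r,s}$, which contradicts the choice of $\Gamma$. Clearly $\Gamma'$ has the same underlying $r$ by $s$ bipartite graph $G$. Moreover, $C$ does not contain the edge $v_av_b$, so its sign is unchanged and $C$ is still a negative cycle in $\Gamma'$. Hence $\Gamma'$ is unbalanced.

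The main obstacle is to show that $\Gamma'$ contains no $C_4^-$. Reversing the sign of $v_av_b$ may turn a positive $4$-cycle through $v_av_b$ into a negative one, so reversal alone might not work. The fallback is to use the other option in Lemma \ref{add} and delete the edge instead: let $\Gamma''=\Gamma^*-v_av_b$. By Lemma \ref{add}, $\lambda_1(\Gamma'')>\lambda_1(\Gamma^*)$. Deleting an edge cannot create a new $4$-cycle, so $\Gamma''$ contains no $C_4^-$. Since $v_av_b\notin E(C)$, the cycle $C$ survives in $\Gamma''$ with its sign intact, so $\Gamma''$ is still unbalanced. Finally, $\Gamma''$ is still an $r$ by $s$ bipartite signed graph; the class $\mathbf{\Gamma}_{r,s}$ does not require connectivity, and any disconnection is harmless for the maximization of $\rho$.

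Thus $\Gamma''\in\mathbf{\Gamma}_{r,s}$ and, by Lemma \ref{same}, $\rho(\Gamma'')=\lambda_1(\Gamma'')>\lambda_1(\Gamma^*)=\rho(\Gamma)$. This contradicts the maximality of $\Gamma$, and therefore $x_a=x_b=0$.
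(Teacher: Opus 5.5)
Your final argument, deleting $v_av_b$ instead of reversing its sign, observing that $C$ survives and no $C_4^-$ can appear, and then applying Lemma \ref{add}, is exactly the paper's proof. The sign-reversal detour is unnecessary. Also, $\rho(\Gamma'')=\lambda_1(\Gamma'')$ follows from bipartiteness rather than Lemma \ref{same}, but this does not affect correctness.
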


\begin{proof}
	Suppose that $x_a\ne 0$ or $x_b\ne 0$. Then $\Gamma^*-v_av_b\in \mathbf{\Gamma}_{r, s}$ as $v_av_b$ lies outside  $C$. By Lemma \ref{add}, we have $\lambda_1(\Gamma^*-v_av_b)>\lambda_1(\Gamma^*)$, a contradiction.
\end{proof}

\begin{Claim}\label{x2-0}
Let $S=\{v_i: x_i=0, i\in [n]\}$. If  $|S|\ge 3$,  then there are three vertices $v_i,v_j,v_k$ in $S$ such that $|E(G-\{v_i,v_j,v_k\})|\le (r-1)(s-2)$.
\end{Claim}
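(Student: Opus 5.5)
The plan is a counting argument on where the three zero vertices sit in the bipartition $X\cup Y$, backed up by the eigen-equations when the naive count fails.

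\textbf{The easy placements.} If we delete $a$ vertices of $X$ and $b$ vertices of $Y$ with $a+b=3$, then $G-\{v_i,v_j,v_k\}$ is a subgraph of $K_{r-a,s-b}$. Hence
\[
|E(G-\{v_i,v_j,v_k\})|\le (r-a)(s-b).
\]
Comparing with $(r-1)(s-2)$ gives:
\begin{itemize}
\item $(a,b)=(1,2)$: equality, so this choice works.
\item $(a,b)=(2,1)$: $(r-2)(s-1)\le (r-1)(s-2)$ because $r\le s$.
\item $(a,b)=(3,0)$: $(r-3)s\le (r-2)(s-1)\le (r-1)(s-2)$.
\end{itemize}
So whenever $S$ contains a vertex of $X$ and two vertices of $Y$, or two vertices of $X$, we pick accordingly and are done. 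Since $|S|\ge 3$, the only remaining case is $S\cap X=\emptyset$, that is, $S\subseteq Y$. There the trivial bound is $r(s-3)$, which exceeds $(r-1)(s-2)$ by $s-r-2$. I expect this case to be the main obstacle.

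\textbf{Structure when $S\subseteq Y$.} In this case every vertex of $X$ has a positive entry of $\mathbf{x}$. By Claim \ref{nz}, any negative edge outside $C$ has both endpoints in $S$. It would then be an edge with both ends in $Y$, which is impossible, so every negative edge of $\Gamma^*$ lies on $C$.

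Now take $v\in S$ and consider the eigen-equation
\[
0=\lambda_1(\Gamma^*)x_v=\sum_{w\in N(v)}\sigma^*(vw)x_w .
\]
All the $x_w$ with $w\in X$ are positive. If $v\notin C$, all edges at $v$ are positive, so $v$ is isolated. If $v\in C$, then $v$ must be incident to a negative edge of $C$, and its positive contributions are balanced by at most two negative ones (its two $C$-neighbours).

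\textbf{Producing the missing edges.} The plan is to extract at least $s-r-2$ missing edges in $G-\{v_i,v_j,v_k\}$, or to show that this case cannot occur at a maximizer, using two facts.
\begin{itemize}
\item \emph{Inducedness of $C$.} Since $C$ has length $t\ge 6$, every $X$-vertex of $C$ is non-adjacent to $t/2-2\ge 1$ of the $Y$-vertices of $C$.
\item \emph{No $C_4^-$ near $C$.} Let $w$ be a $Y$-vertex adjacent to two $X$-vertices $u,u'$ of $C$ that have a common $C$-neighbour $y$. Then $u\,y\,u'\,w$ is a $4$-cycle, and it must be positive. If the edge $uy$ or $u'y$ is the negative one, this forbids $w$ (whose edges are all positive) from being adjacent to both $u$ and $u'$. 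Applying this to every $Y$-vertex off $C$ should give roughly one missing edge per such vertex, which is about $s-t/2$ missing edges. That is at least $s-r-2$ once the three deleted vertices are accounted for.
\end{itemize}
If the count falls short, the fallback is to use the isolated zero vertices of $Y$. Such a vertex contributes nothing, so $\lambda_1(\Gamma^*)$ is the index of an $r$ by $s-1$ signed graph. I would then aim to contradict $\lambda_1(\Gamma^*)>\sqrt{(r-1)(s-2)}$ by re-running the edge count on that smaller graph, where the bound $(r-1)(s-2)$ is easier to beat.
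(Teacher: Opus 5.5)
Your case split matches the paper's. So does the decisive tool in the case $S\subseteq Y$: Claim~\ref{nz} forces every negative edge onto $C$, and then a $Y$-vertex $w$ off $C$ cannot be adjacent to both $X$-neighbours $u,u'$ of a suitable $Y$-vertex $y$ of $C$. But that case is not closed as written.

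First, you never produce $y$; your bullet is only conditional. What you need is a $Y$-vertex of $C$ whose two $C$-edges have \emph{opposite} signs. If both are negative, $u\,y\,u'\,w$ is a positive $4$-cycle and nothing is forbidden. Such a $y$ exists by parity. The pairs of $C$-edges at the $t/2$ vertices of $C\cap Y$ partition $E(C)$. If each pair were equally signed, $C$ would have an even number of negative edges, contradicting that $C$ is negative.

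Second, the count you sketch falls short. Let $d\le 3$ be the number of deleted vertices lying off $C$. The vertices of $Y$ off $C$ then guarantee only $s-t/2-d$ non-edges. When $r=t/2$ and $d=3$ this is $s-r-3<s-r-2$. So you must also use inducedness, and the clean way to merge your two bullets is the paper's. Every $Y$-vertex $w\neq y$ is adjacent to at most one of $u,u'$:
\begin{itemize}
\item for $w$ off $C$, because $u\,y\,u'\,w\,u$ would be a $C_4^-$;
\item for $w$ on $C$, because $C$ is induced and $t\ge 6$.
\end{itemize}
Hence, after deleting any three vertices of $Y$, the degrees of $u$ and $u'$ sum to at most $s-2$, and every other vertex of $X$ has degree at most $s-3$. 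This gives
\[
|E(G-\{v_i,v_j,v_k\})|\le (s-2)+(r-2)(s-3)=(r-1)(s-2)-(r-2)<(r-1)(s-2).
\]

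Finally, drop the fallback. It is unnecessary, and as described it yields nothing. An $r$ by $s-1$ graph may have $r(s-1)>(r-1)(s-2)$ edges, so removing an isolated zero vertex does not contradict $\lambda_1(\Gamma^*)>\sqrt{(r-1)(s-2)}$ by edge counting. Moreover, the zero vertices lying on $C$ are not isolated.
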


\begin{proof} 
If $S\cap X\ne \emptyset$, then for any three vertices $v_i,v_j,v_k$ in $S$ with at least one in $X$, 
\[
|E(G-\{v_i,v_j,v_k\})|\le \max\{(r-1)(s-2),(r-2)(s-1),(r-3)s\}=(r-1)(s-2),
\]
as desired.
Suppose that  $S\cap X= \emptyset$. Then  there is no negative edge of $\Gamma^*$ outside $C$, as otherwise $S\cap X\ne\emptyset$ by Claim \ref{nz}, a contradiction. 
As $C$ has both negative and positive edges, there are two edges $v_\ell v_{\ell+1}$ and $v_{\ell+1}v_{\ell+2}$ in $C$ such that they have different signature, where $\ell=1,3,\dots,t-1$ and $v_{\ell+2}=v_1$ if $\ell=t-1$.  If $v_\ell$ and $v_{\ell+2}$ have some neighbor $w$ different from $v_{\ell+1}$, then $v_\ell v_{\ell+1}v_{\ell+2}wv_\ell$ is a negative $C_4$, a contradiction. So $v_\ell$ and $v_{\ell+2}$ have no common neighbors outside $C$. Let $v_i, v_j, v_k$ be any three vertices in $S$.
If $v_{\ell+1}\notin \{v_i,v_j,v_k\}$, then $|N_{G-\{v_i,v_j,v_k\}}(v_{\ell})|+|N_{G-\{v_i,v_j,v_k\}}(v_{\ell+2})|\le |Y|-3<s-2$, and otherwise
$|N_{G-\{v_i,v_j,v_k\}}(v_{\ell})|+|N_{G-\{v_i,v_j,v_k\}}(v_{\ell+2})|\le |Y|-3+1=s-3+1=s-2$. In either case, $|N_{G-\{v_i,v_j,v_k\}}(v_{\ell})|+|N_{G-\{v_i,v_j,v_k\}}(v_{\ell+2})|\le s-2$.
So 
\begin{align*}
& \quad|E(G-\{v_i,v_j,v_k\})|\\ 
& =|N_{G-\{v_i,v_j,v_k\}}(v_{\ell})|+|N_{G-\{v_i,v_j,v_k\}}(v_{\ell+2})|+\sum_{w\in X\setminus\{v_{\ell},v_{\ell+2}\}}|N_{G-\{v_i,v_j,v_k\}}(w)|\\
&\le s-2+\sum_{w\in X\setminus\{v_{\ell},v_{\ell+2}\}}(s-3)\\
& =s-2+(r-2)(s-3)\\
& < (r-1)(s-2),
\end{align*}
as desired. 
\end{proof} 

\begin{Claim}\label{x2}
$\mathbf{x}$ contains at most two zero entries.
\end{Claim}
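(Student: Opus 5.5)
We argue by contradiction: suppose $|S|\ge 3$, where $S$ is the set of vertices with zero entries in $\mathbf{x}$ as in Claim~\ref{x2-0}. Then Claim~\ref{x2-0} supplies three vertices $v_i,v_j,v_k\in S$ with $|E(G-\{v_i,v_j,v_k\})|\le (r-1)(s-2)$. The aim is to show that $\lambda_1(\Gamma^*)\le\sqrt{(r-1)(s-2)}$, which contradicts the strict inequality $\lambda_1(\Gamma^*)>\sqrt{(r-1)(s-2)}$ obtained from Lemma~\ref{comp}(ii).

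The first step is to delete the zero vertices without changing the eigenvalue. Since $x_i=x_j=x_k=0$, the restriction $\mathbf{y}$ of $\mathbf{x}$ to $V(G)\setminus\{v_i,v_j,v_k\}$ is nonzero. This holds because at least one entry of the unit vector $\mathbf{x}$ is positive and $r+s\ge 7$. For every remaining vertex $v$, the eigen-equation $\lambda_1 x_v=\sum_{w\sim v}\sigma^*(vw)x_w$ receives no contribution from $v_i,v_j,v_k$. Therefore $A(\Gamma^*-\{v_i,v_j,v_k\})\mathbf{y}=\lambda_1(\Gamma^*)\mathbf{y}$, and so $\lambda_1(\Gamma^*)$ is an eigenvalue of $\Gamma':=\Gamma^*-\{v_i,v_j,v_k\}$. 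Consequently $\lambda_1(\Gamma^*)\le\lambda_1(\Gamma')$.

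The second step bounds $\lambda_1(\Gamma')$ by the edge count. By Lemma~\ref{under}, $\lambda_1(\Gamma')\le\lambda_1(G')$, where $G'=G-\{v_i,v_j,v_k\}$ is bipartite with $m'\le (r-1)(s-2)$ edges. Because $G'$ is bipartite, its spectrum is symmetric about $0$. Hence
\[
2\lambda_1(G')^2\le\sum_i\lambda_i(G')^2=2m',
\]
which gives $\lambda_1(G')\le\sqrt{m'}\le\sqrt{(r-1)(s-2)}$. Chaining the inequalities yields $\lambda_1(\Gamma^*)\le\sqrt{(r-1)(s-2)}$, the desired contradiction. Thus $|S|\le 2$.

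The only delicate point is checking that $\mathbf{y}$ is nonzero and that the eigen-equation genuinely restricts to $\Gamma'$. Both are immediate from $\mathbf{x}$ being a nonnegative unit vector with exactly three of its entries known to be zero. Everything else follows directly from Claim~\ref{x2-0} and Lemma~\ref{comp}(ii).
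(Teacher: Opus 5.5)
Your proof is correct and follows the paper's argument. You delete the three zero vertices supplied by Claim~\ref{x2-0}, bound $\lambda_1(\Gamma^*)\le\lambda_1(\Gamma^*-\{v_i,v_j,v_k\})\le\lambda_1(G-\{v_i,v_j,v_k\})\le\sqrt{(r-1)(s-2)}$ via Lemma~\ref{under}, and contradict Lemma~\ref{comp}(ii). The differences are cosmetic: you restrict the eigen-equation where the paper uses the Rayleigh quotient $\mathbf{x}^\top A(\Gamma^*)\mathbf{x}=\mathbf{y}^\top A(\Gamma^*-\{v_i,v_j,v_k\})\mathbf{y}$, and you spell out the bipartite bound $\lambda_1\le\sqrt{m'}$ that the paper uses without comment. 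Also, $\mathbf{y}\ne\mathbf{0}$ holds simply because $\|\mathbf{y}\|=\|\mathbf{x}\|=1$, so $r+s\ge 7$ is not needed.
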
	

\begin{proof} Suppose that $\mathbf{x}$ contains at least three zero entries. By Claim \ref{x2-0}, there are three vertices $v_i,v_j,v_k$ in $S$ such that $|E(G-\{v_i,v_j,v_k\})|\le (r-1)(s-2)$.  So 
\[
\lambda_1(G-\{v_i,v_j,v_k\})\le \sqrt{|E(G-\{v_i,v_j,v_k\})|}\le \sqrt{(r-1)(s-2)}.
\]
Let $\mathbf{y}$ be a vector obtained from $\mathbf{x}$ by deleting three entries $x_i$, $x_j$ and $x_k$.
Then by Rayleigh's principle and  Lemma \ref{under}, 
\begin{align*}
\lambda_1(\Gamma^*)& =\mathbf{x}^\top A(\Gamma^*) \mathbf{x}=\mathbf{y}^\top A(\Gamma^*-\{v_i,v_j,v_k\})\mathbf{y}\\
& \le \lambda_1(\Gamma^*-\{v_i,v_j,v_k\})\le \lambda_1(G-\{v_i,v_j,v_k\})\\
& \le \sqrt{(r-1)(s-2)},
\end{align*}
a contradiction.
\end{proof}

\begin{Claim}\label{zjia}
There is at most one negative edge outside $C$ in $\Gamma^*$.
\end{Claim}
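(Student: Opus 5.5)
The plan is to combine Claim~\ref{nz} with Claim~\ref{x2}. Suppose, for a contradiction, that $\Gamma^*$ has two distinct negative edges $v_av_b$ and $v_cv_d$ outside $C$. By Claim~\ref{nz}, each endpoint of these edges has a zero entry in $\mathbf{x}$, so $x_a=x_b=x_c=x_d=0$. Since the edges are distinct, $\{v_a,v_b\}\neq\{v_c,v_d\}$. Hence $\{v_a,v_b,v_c,v_d\}$ contains at least three distinct vertices, and $\mathbf{x}$ would have at least three zero entries. This contradicts Claim~\ref{x2}.

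The only point that needs care is that two distinct edges in a simple graph cannot share both endpoints; they share at most one. So the union of their endpoint sets has size $3$ (when the edges share a vertex) or $4$ (when they are disjoint). Both cases are settled by the same count, so no case split is really needed.

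There is essentially no obstacle here. The substantive work, namely the edge-count bound after deleting three zero-entry vertices and the comparison with $\sqrt{(r-1)(s-2)}<\lambda_1(\Gamma_{r,s})$ from Lemma~\ref{comp}(ii), was already done in Claims~\ref{x2-0} and~\ref{x2}.

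Claim~\ref{nz} is legitimately available because $\Gamma^*$ is an extremal graph. Deleting a negative edge that lies outside $C$ keeps $C$ as a negative cycle, and it cannot create a $C_4^-$. So the deleted graph stays in $\mathbf{\Gamma}_{r,s}$, and Lemma~\ref{add} forces both endpoint entries to vanish.
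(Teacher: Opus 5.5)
Your proof is correct and is the paper's argument: the paper disposes of this claim in one line by combining Claim~\ref{nz} with Claim~\ref{x2}. Your remark that two distinct edges span at least three distinct vertices, and hence give at least three zero entries of $\mathbf{x}$, is exactly the step the paper leaves implicit.
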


\begin{proof}
It follows from  Claims \ref{x2} and \ref{nz}.
\end{proof}

\begin{Claim}\label{con}
$G$ is connected.
\end{Claim}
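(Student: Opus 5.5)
We argue by contradiction and use the zero-entry bound from Claim \ref{x2}. Suppose $G$ is disconnected. Since $\Gamma^*$ is unbalanced, the negative cycle $C$ lies in one component, call it $G_1$; let $H$ be the union of the remaining components, which is nonempty.

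The first step is to show that $\mathbf{x}$ vanishes on all but one component. For a real symmetric block-diagonal matrix, $\lambda_1$ is the maximum of the $\lambda_1$'s of the blocks. Moreover, if two blocks both attained this maximum, we could choose a unit eigenvector for $\lambda_1(\Gamma^*)$ supported on a single component. Nothing in Claims \ref{nz}--\ref{zjia} depends on which non-negative unit eigenvector is used, so we may assume from the outset that $\mathbf{x}$ is supported on one component, say $G_0$. Every vertex outside $G_0$ then has $x_i=0$. By Claim \ref{x2}, at most two vertices lie outside $G_0$.

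The second step is to show that there is no room for such vertices. Let $W=V(G)\setminus V(G_0)$, so $1\le|W|\le 2$. Because $W$ is a union of components, the vertices of $W$ that lie in $X$ have no neighbours in $Y\setminus W$, and symmetrically. We would like to join some $w\in W$ to a vertex $z$ of $G_0$ in the opposite part with $x_z>0$, using a positive edge. Such a $z$ exists: $\mathbf{x}\ne 0$ is supported on $G_0$, and because $G_0$ is bipartite and $\mathbf{x}$ is an eigenvector, it has nonzero entries in both parts.

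This new edge is a bridge between components, so it lies on no cycle. Hence the new graph still contains no $C_4^-$, is still unbalanced, and is still $r$ by $s$. Since $x_wx_z=0\ge 0$ and $x_z\ne0$, Lemma \ref{add} gives a strictly larger $\lambda_1$. This contradicts the maximality of $\Gamma$, using Lemma \ref{same} to pass between $\Gamma$ and $\Gamma^*$.

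It remains to place $C$ inside $G_0$. We must rule out the case where $G_0$ is not the component containing $C$. If that happened, $G_0$ would be a balanced component, and $C$ would lie in $W$. But $|W|\le 2<6\le t$, which is impossible. So this case cannot occur.

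The main obstacle is the first step, justifying that $\mathbf{x}$ is supported on one component. If the top eigenvalue is attained on two components, a general non-negative eigenvector could spread over both. I would handle this by fixing, at the beginning of the proof, a non-negative eigenvector supported on a single component; this is possible because switching within a component acts independently of the other components, so Lemma \ref{L+} can be applied componentwise. Alternatively, if $x$ is positive on two components, we can add a positive bridge between them with $x_kx_\ell>0$. This again preserves all the constraints and, by Lemma \ref{add}, increases $\lambda_1$, a contradiction. That alternative gives the claim directly without any choice of eigenvector, and I would present it as the main argument.
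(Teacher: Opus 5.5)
Your argument is correct and is essentially the paper's: a positive edge between two components is a bridge, so it keeps the graph in $\mathbf{\Gamma}_{r,s}$, and Lemma~\ref{add} then gives a strict increase of $\lambda_1$, contradicting maximality. Your first step (support on a single component), the bound $|W|\le 2$ from Claim~\ref{x2}, and the placement of $C$ are not needed: if $G_0$ is any component with $\mathbf{x}\ne 0$ on it and $w$ is any vertex of another component, the eigen-equation together with $\lambda_1(\Gamma^*)>0$ already gives $z\in G_0$ in the part opposite $w$ with $x_z>0$ (this also avoids the paper's tacit assumption that some other component has a vertex in the part opposite $v_1$).
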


\begin{proof}
Suppose that $G$ is not connected. Assume that $v_1v_2$ is a negative edge of $C$ with $x_1\ge x_2$. There is a connected component $H$ containing $v_1v_2$. 
Assume that $v_n$ is a vertex in a component different from $H$  and  $v_1$ and $v_n$ lie in different partite sets of $G$. Then $\Gamma^*+v_1v_n$ contains no $C_4^-$ as $v_1v_n$ is a cut edge, and it is unbalanced as $C$ is also a negative cycle of $\Gamma^*+v_1v_n$.
So $\Gamma^*+v_1v_n\in \mathbf{\Gamma}_{r, s}$. By Claim \ref{x2}, we have $x_1\ne 0$ or $x_n\ne 0$.
By Lemma \ref{add}, we have  $\lambda_1(\Gamma^*+v_1v_n)>\lambda_1(\Gamma^*)$, a contradiction.
\end{proof}

\begin{Claim}\label{nout}
If there is a negative edge outside $C$ in $\Gamma^*$, then it does not lie on any $C_4$.
\end{Claim}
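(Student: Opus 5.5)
Let $v_av_b$ be a negative edge of $\Gamma^*$ outside $C$, and suppose for contradiction that it lies on a $4$-cycle $D=v_av_bwzv_a$. By Claim \ref{nz}, $x_a=x_b=0$, and by Claim \ref{x2} these are the only zero entries of $\mathbf{x}$, so $x_w>0$ and $x_z>0$. By Claim \ref{zjia}, $v_av_b$ is the only negative edge outside $C$. Since $\Gamma^*$ has no $C_4^-$, $D$ is positive, so an odd number of the edges $v_bw,wz,zv_a$ are negative. Each such negative edge lies on $C$ or is $v_av_b$; since $v_av_b$ is already counted, these edges are negative edges of $C$. My plan is to derive a contradiction from this configuration by using the eigen-equations at $v_a$ and $v_b$, together with the fact that every vertex other than $v_a,v_b$ has a positive entry.

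First consider the eigenvalue equation at $v_a$:
\[
0=\lambda_1(\Gamma^*)x_a=\sum_{u\in N(v_a)}\sigma^*(v_au)x_u .
\]
The term from $v_b$ vanishes because $x_b=0$. Every other neighbour of $v_a$ has a positive entry. So $v_a$ must have at least one negative edge to a neighbour with positive entry. That edge is not $v_av_b$, so by Claim \ref{zjia} it lies on $C$. Hence $v_a\in V(C)$, and by the same argument at $v_b$, also $v_b\in V(C)$. Since $C$ is induced and $v_av_b\notin E(C)$, the vertices $v_a$ and $v_b$ are non-consecutive on $C$. The edge $v_av_b$ is then a chord of $C$, which contradicts the inducedness of $C$. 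This step looks like it closes the argument quickly. If it does, the $C_4$ hypothesis is not even needed and the claim holds vacuously, because no negative edge outside $C$ exists at all.

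I expect the main obstacle to be the sign bookkeeping in the eigen-equation at $v_a$, since it is easy to let a term be cancelled by the wrong one. Each neighbour $u\neq v_b$ of $v_a$ has $x_u>0$, so every positive edge at $v_a$ other than $v_av_b$ contributes strictly positively to the sum. For the sum to be zero, some edge at $v_a$ other than $v_av_b$ must therefore be negative. The one exception is when $v_a$ has no neighbours besides $v_b$. This exception is excluded because $v_a$ lies on $D$, so $z$ is another neighbour with $x_z>0$; this is where the $C_4$ hypothesis is used. The same reasoning at $v_b$, with $w$ playing the role of $z$, yields a negative edge of $C$ at $v_b$.

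The chord contradiction then completes the proof. If the chord argument needed more care, a fallback would be to use the negativity of $v_av_b$ together with the negative edges of $C$ at $v_a$ and $v_b$ to build a shorter negative cycle, contradicting the minimality of $C$.
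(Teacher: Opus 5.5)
Your proof is correct, but it takes a different route from the paper's.

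\textbf{The paper's route.} It works with the $C_4$ itself. Being positive and containing the negative edge $v_av_b$, the $C_4$ must contain a negative edge of $C$, say $v_1v_2$. Combining $C$ with the $C_4$ (their symmetric difference) yields a negative cycle avoiding $v_1v_2$, so $\Gamma^*-v_1v_2\in\mathbf{\Gamma}_{r,s}$. Since $x_1>0$ by Claim~\ref{x2}, Lemma~\ref{add} then gives $\lambda_1(\Gamma^*-v_1v_2)>\lambda_1(\Gamma^*)$, contradicting maximality.

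\textbf{Your route.} You perturb nothing. The eigen-equation at $v_a$, with $x_a=x_b=0$ and all other entries positive, forces a negative edge at $v_a$ other than $v_av_b$. By Claim~\ref{zjia} that edge lies on $C$. The same holds at $v_b$, so $v_av_b$ is a chord of the induced cycle $C$. The $C_4$ enters only to give $v_a$ and $v_b$ each a second neighbour ($z$ and $w$). So the parity remark in your first paragraph and the fallback in your last are not needed.

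\textbf{What each buys.} Your argument needs no case distinction on how the two cycles overlap, a step the paper only sketches. It also proves a stronger fact: every endpoint of degree at least two of a negative edge outside $C$ lies on $C$. For instance, this disposes of Case~2 of Claim~\ref{non} at once: both endpoints would be pendant, making $v_av_b$ an isolated edge, contrary to Claim~\ref{con}. The paper's route, in turn, keeps the whole section on the single mechanism of Lemma~\ref{add}.

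\textbf{One correction.} Your sentence that ``the $C_4$ hypothesis is not even needed and the claim holds vacuously'' is false, and it contradicts your own third paragraph. If $v_a$ is adjacent only to $v_b$, the eigen-equation at $v_a$ reads $0=0$. Nothing at this stage excludes such a pendant negative edge outside $C$. The $C_4$ hypothesis is exactly what rules it out, so delete that sentence.
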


\begin{proof}
It is trivial that a negative edge outside $C$ in $\Gamma^*$  does not lie on any  negative $C_4$.

Suppose that there is a negative edge $v_av_b$ in $\Gamma^*$ with $1\le a<b\le n$ outside $C$ and it lies on some positive $C_4$. By Claim \ref{zjia}, $v_av_b$ is the unique 
negative edge outside $C$ in $\Gamma^*$. Then  the positive $C_4$ must contain a negative edge in $C$, say $v_1v_2$. Whether the two cycles share one or two edges,  $\Gamma^*-v_1v_2$ has a negative induced cycle consisting of the edges of the two cycles but not on both. 
So $\Gamma^*-v_1v_2\in \mathbf{\Gamma}_{r, s}$.
Assume that $x_1\ge x_2$.
By Claims \ref{x2} and \ref{nz}, we have $x_a=x_b=0$, and $x_1>0$.
By Lemma \ref{add}, we have $\lambda_1(\Gamma^*-v_1v_2)>\lambda_1(\Gamma^*)$, a contradiction.
\end{proof}

\begin{Claim}\label{one}
There is exactly one negative edge on $C$ in $\Gamma^*$.
\end{Claim}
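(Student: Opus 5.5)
The plan is to argue by contradiction, in the same style as Claims \ref{nz}, \ref{con} and \ref{nout}: from any extra negative edge on $C$ we will build a member of $\mathbf{\Gamma}_{r,s}$ with strictly larger index. Since $C$ is negative, it carries an odd number of negative edges. So suppose it carries at least three, say $e_1,e_2,e_3$. The basic move is to \emph{delete} one of these edges, say $e_i=v_av_b$. Deleting edges never creates a $C_4^-$, and $x_ax_b\ge 0$ because $\mathbf{x}$ is non-negative. Hence, by Lemma \ref{add}, $\lambda_1(\Gamma^*-e_i)>\lambda_1(\Gamma^*)$ as soon as two conditions hold: $x_a+x_b>0$, and $\Gamma^*-e_i$ is still unbalanced. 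Either condition failing for every choice must then be ruled out.

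First I handle the nonzero-entry condition. By Claim \ref{x2}, $\mathbf{x}$ has at most two zero entries, and each zero vertex covers at most two edges of $C$. An edge $e_i$ whose two endpoints are both zero forces those two adjacent vertices to be the only zero vertices, and only one edge of $C$ joins them. So at most one of $e_1,e_2,e_3$ has both endpoints zero, and at least two of them, say $e_1,e_2$, have a nonzero endpoint.

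Next comes unbalancedness, which I expect to be the main obstacle. Suppose $\Gamma^*-e_1$ is balanced. By Lemma \ref{SE} and \cite[Lemma 5.3]{Z2}, $\Gamma^*$ is then switching equivalent to $(G,\sigma')$ in which $e_1$ is the only negative edge. Thus a cycle of $\Gamma^*$ is negative if and only if it passes through $e_1$, and in particular $\Gamma^*$ contains no $C_4$ through $e_1$.
\begin{itemize}
\item If also $\Gamma^*-e_2$ is balanced, then negative cycles are exactly the cycles through $e_1$ and exactly those through $e_2$. Every cycle of $G$ then contains both of $e_1,e_2$ or neither, so $\{e_1,e_2\}$ is an edge cut of $G$ separating the two arcs of $C-\{e_1,e_2\}$.
\item If only $\Gamma^*-e_1$ is balanced, we delete $e_2$ instead and are done.
\end{itemize}
So it remains to exclude the configuration where $G$, connected by Claim \ref{con}, has a $2$-edge cut $\{e_1,e_2\}$ with both edges lying on $C$.

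To kill that configuration I would count edges. Let the cut split $V(G)$ into parts with $r_1+r_2=r$ and $s_1+s_2=s$, each part containing at least two vertices of $C$. Then
\[
|E(G)|\le r_1s_1+r_2s_2+2\le (r-1)(s-2)
\]
in every admissible split (the extreme case is one side equal to a single $X$-vertex and a single $Y$-vertex), with strict inequality for $s\ge 4$ after checking the small splits directly. This gives
\[
\lambda_1(\Gamma^*)\le\lambda_1(G)\le\sqrt{|E(G)|}\le\sqrt{(r-1)(s-2)},
\]
which contradicts Lemma \ref{comp}(ii). Hence $\Gamma^*-e_1$ or $\Gamma^*-e_2$ lies in $\mathbf{\Gamma}_{r,s}$ and has larger index, which contradicts the maximality of $\Gamma$. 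The delicate points are the edge-count inequality for lopsided splits and the degenerate case $r=3$ (where $t=6$). If the counting turns out too weak in some small case, I would instead reverse the signs of $e_1$ and $e_2$ simultaneously. This keeps $C$ negative, and Lemma \ref{add} applied twice raises $\lambda_1$. Claim \ref{nout} together with the $C_4^-$-freeness would then be used to check that no $C_4$ through exactly one of $e_1,e_2$ exists.
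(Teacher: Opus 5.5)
Your deletion step and the reduction to the case where $\Gamma^*-e_1$ and $\Gamma^*-e_2$ are both balanced are correct; in that case $\{e_1,e_2\}$ is a $2$-edge cut of $G$. That remaining case, however, is not closed.

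\textbf{The edge count fails.} The inequality $r_1s_1+r_2s_2+2\le (r-1)(s-2)$ is false precisely in the extreme case you name: $r_1=s_1=1$ gives $(r-1)(s-1)+3$. Moreover, if $e_1,e_2$ share a vertex, one arc of $C-\{e_1,e_2\}$ is a single vertex, so a side need not contain two vertices of $C$. No sharper edge count can work either. Switch $\Gamma_{r,s}$ at the two interior vertices $v_1,u_1$ of the path $uv_1u_1v$ (notation of the proof of Lemma \ref{comp}). Then:
\begin{itemize}
\item all three edges of this path become negative and lie on a negative $6$-cycle;
\item deleting any one of them leaves a balanced graph;
\item any two of them form a $2$-edge cut;
\item yet the graph has $(r-1)(s-1)+2$ edges and index $f(r,s)>\sqrt{(r-1)(s-2)}$.
\end{itemize}
Only the non-negativity of $\mathbf{x}$ rules this graph out as $\Gamma^*$, so the eigenvector has to be used in this case.

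\textbf{The fallback needs a different justification.} Reversing both signs is the right move, but not because $\lambda_1$ increases. In the cut configuration every cycle contains both or neither of $e_1,e_2$. Hence reversing both signs is just switching at one side of the cut, and $\lambda_1$ does not increase at all. Lemma \ref{add} also cannot be chained: after the first reversal the graph is balanced. What works is the equality case of Rayleigh's principle, which is the paper's argument. Write $e_1=v_av_b$, $e_2=v_cv_d$, and let $\Gamma'$ be the graph with both signs reversed. Lemma \ref{same} gives
\[
\lambda_1(\Gamma^*)=\lambda_1(\Gamma')\ge \mathbf{x}^\top A(\Gamma')\mathbf{x}=\lambda_1(\Gamma^*)+4(x_ax_b+x_cx_d)\ge\lambda_1(\Gamma^*).
\]
So $\mathbf{x}$ is also a $\lambda_1$-eigenvector of $A(\Gamma')$, and $(A(\Gamma')-A(\Gamma^*))\mathbf{x}=\mathbf{0}$. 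This forces $\mathbf{x}$ to vanish at every endpoint of $e_1$ and $e_2$; at a shared endpoint $v_b=v_c$ one gets $x_a+x_d=0$, and then $\mathbf{x}\ge 0$ gives $x_a=x_d=0$. That is at least three zero entries, contradicting Claim \ref{x2}.

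With this repair your proof is valid, and in one respect it is tidier than the paper's. The paper reverses two negative edges of $C$ directly and must check by hand that no $C_4^-$ appears. In your leftover case $\Gamma'$ is switching equivalent to $\Gamma^*$, so nothing needs checking.
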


\begin{proof} Suppose the claim is false. 
Then there are at least three negative edges on $C$ in $\Gamma^*$. Assume $v_1v_2$ and $v_iv_{i+1}$ are negative edges, where $2\le i\le t$ with $v_{t+1}=v_1$.
Let $\Gamma'$ be the signed graph obtained from $\Gamma^*$ by reverseing the sign of $v_1v_2$ and $v_iv_{i+1}$. 

Suppose first that $\Gamma'$ contains some $C_4^-$, say $C'$, which must be a positive $C_4$ in $\Gamma^*$, and it contains $v_1v_2$ or $v_iv_{i+1}$. By Claim \ref{nout},  it contains no negative edges outside $C$. So it contains a  negative edge on $C$ different from  $v_1v_2$ and $v_iv_{i+1}$, implying that there is a chord on $C$, a contradiction. 
%
%
%
Suppose next that $C'$ contains exactly one of $v_1v_2$ or $v_iv_{i+1}$, say $v_1v_2$. By Claim \ref{nout} and the fact that there is no chords on $C$,  $C'$ contains no negative edge outside $C$ and contains a negative edge on $C$ different from $v_1v_2$, which must be $v_2v_3$ or $v_tv_1$, say $v_2v_3$. Let $E_1$ be the set of edges  on $C$ and $C'$ except $v_1v_2$ and $v_2v_3$. It induces  a negative cycle of $\Gamma^*$.
By Claim \ref{x2}, one of $x_1,x_2,x_3$ is nonzero.
Let $\dot\Gamma=\Gamma^*-v_1v_2$ if $x_1\ne0$ or $x_2\ne0$, and $\dot\Gamma=\Gamma^*-v_2v_3$ if $x_3\ne0$.
Then $\dot\Gamma\in \mathbf{\Gamma}_{r, s}$. By Lemma \ref{add}, we have  $\lambda_1(\dot\Gamma)>\lambda_1(\Gamma^*)$, a contradiction.
It follows that $\Gamma'$ contains no $C_4^-$.
Thus $\Gamma'\in \mathbf{\Gamma}_{r, s}$.

By Rayleigh's principle,
\[
0\ge \lambda_1(\Gamma')-\lambda_1(\Gamma^*)\ge\mathbf{x}^T(A(\Gamma')-A(\Gamma^*))\mathbf{x}
=4(x_1x_2+x_ix_{i+1})\ge0,
\]
so the above inequalities are equalities, implying that $x_1x_2+x_ix_{i+1}=0$, $\lambda_1(\Gamma')=\lambda_1(\Gamma^*)$, and $\mathbf{x}$ is an eigenvector associated with $\lambda_1(\Gamma')$.
If $i=2$ or $t$, say $i=2$, then
\[
2x_2=(\lambda_1(\Gamma')-\lambda_1(\Gamma^*))x_1=0
\]
and
\[
2(x_1+x_3)=(\lambda_1(\Gamma')-\lambda_1(\Gamma^*))x_2=0,
\]
so $x_1=x_2=x_3=0$, contradicting Claim \ref{x2}. Thus $i\ne2,t$. Since
\[
2x_1=(\lambda_1(\Gamma')-\lambda_1(\Gamma^*))x_2=0
\]
and
\[
2x_2=(\lambda_1(\Gamma')-\lambda_1(\Gamma^*))x_1=0, 
\]
we have $x_1=x_2=0$. As $x_1x_2+x_ix_{i+1}=0$, we have $x_i=0$ or $x_{i+1}=0$, contradicting Claim \ref{x2}.
\end{proof}

By Claim \ref{one}, there is exactly one negative edge on $C$, which is assumed to be $v_1v_2$.

\begin{Claim}\label{non}
There is no negative edge outside $C$.
\end{Claim}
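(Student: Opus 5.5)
Suppose, for a contradiction, that there is a negative edge $v_av_b$ outside $C$. By Claim \ref{zjia} it is the unique such edge. By Claim \ref{nz}, $x_a=x_b=0$, and by Claim \ref{x2} these are the only zero entries of $\mathbf{x}$. So every vertex other than $v_a,v_b$ has a positive entry; in particular $x_1,x_2>0$, where $v_1v_2$ is the unique negative edge on $C$ (Claim \ref{one}).

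The plan is to switch a suitable vertex set so that $v_1v_2$ becomes positive while $v_av_b$ stays negative, and to show that the resulting signed graph lies in $\mathbf{\Gamma}_{r,s}$ and has strictly larger index. Concretely, I would compare $\Gamma^*$ with $\Gamma''$, obtained from $\Gamma^*$ by reversing the sign of $v_1v_2$ only.

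First I check that $\Gamma''$ is unbalanced. The edge $v_av_b$ is negative in $\Gamma''$. If every cycle through $v_av_b$ were positive in $\Gamma''$, then $\Gamma''$ would be balanced: its only negative edge would lie on no negative cycle. I will argue this is impossible. Since $G$ is connected (Claim \ref{con}) and bipartite, $v_av_b$ is not a bridge in a graph with this many edges. Indeed, a bridge $v_av_b$ would give a decomposition in which adding positive edges across it keeps the graph $C_4^-$-free, contradicting maximality via Lemma \ref{add}. This needs some care, since Lemma \ref{add} requires a nonzero endpoint; I would add an edge from $v_a$ to a positive-entry vertex on the other side. Therefore $v_av_b$ lies on a cycle, and one of these cycles, or its combination with $C$, is negative in $\Gamma''$.

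Second I check that $\Gamma''$ has no $C_4^-$. A $C_4$ that is negative in $\Gamma''$ but not in $\Gamma^*$ must contain $v_1v_2$, and then its other edges are positive in $\Gamma^*$ (by Claim \ref{nout}, no $C_4$ contains $v_av_b$). Such a $C_4$ together with $C$ gives a shorter path between $v_1$ and $v_2$ through outside vertices, or a chord of $C$. As in the proof of Claim \ref{one}, this lets me delete $v_1v_2$ and still keep a negative cycle. So $\Gamma^*-v_1v_2\in\mathbf{\Gamma}_{r,s}$, and Lemma \ref{add} with $x_1x_2>0$ gives a contradiction. Hence $\Gamma''\in\mathbf{\Gamma}_{r,s}$.

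Finally, Lemma \ref{add} (reversing the sign of a negative edge whose endpoints have $x_1x_2>0$) gives $\lambda_1(\Gamma'')>\lambda_1(\Gamma^*)$, contradicting maximality.

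The main obstacle is the unbalancedness of $\Gamma''$, that is, producing a negative cycle through $v_av_b$ after the flip. My first attempt would be to take a cycle $D$ through $v_av_b$, which exists since $v_av_b$ is not a bridge. If $D$ is negative in $\Gamma''$, we are done. Otherwise, $D$ must pass through $v_1v_2$ and be negative in $\Gamma^*$, and then the symmetric difference of $D$ and $C$ contains a negative cycle through $v_av_b$ avoiding $v_1v_2$. If that fails, I would fall back on deleting $v_av_b$: the graph $\Gamma^*-v_av_b$ is still unbalanced (because of $C$) and $C_4^-$-free. Its index is at least $\lambda_1(\Gamma^*)$ by the Rayleigh quotient, since $x_ax_b=0$. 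Equality would force $\mathbf{x}$ to be an eigenvector of $\Gamma^*-v_av_b$. Comparing the eigen-equations at $v_a$ then gives $x_b=0$ with no new information, so instead I would use the equation at a neighbor to derive a contradiction with Claim \ref{x2}.
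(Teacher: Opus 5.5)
\textbf{There is a genuine gap, at the step you flagged yourself: $\Gamma''$ is never unbalanced.}

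Under the hypothesis, the only negative edges of $\Gamma^*$ are $v_1v_2$ and $v_av_b$, with $x_a=x_b=0$ and every other entry positive.
\begin{itemize}
\item Some endpoint of $v_av_b$, say $v_b$, is not in $\{v_1,v_2\}$, since otherwise $v_av_b=v_1v_2$. So every edge at $v_b$ other than $v_bv_a$ is positive.
\item The eigen-equation at $v_b$ then reads $0=\lambda_1(\Gamma^*)x_b=-x_a+\sum_{v_k\in N_{\Gamma^*}(v_b)\setminus\{v_a\}}x_k$, and every summand is positive. Hence $N_{\Gamma^*}(v_b)=\{v_a\}$.
\item If also $v_a\notin\{v_1,v_2\}$, the same computation gives $N_{\Gamma^*}(v_a)=\{v_b\}$, contradicting Claim \ref{con}.
\end{itemize}
So $v_b$ is always a pendant vertex attached at $v_1$ or $v_2$, and $v_av_b$ is a bridge. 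After you reverse $v_1v_2$, the unique negative edge of $\Gamma''$ lies on no cycle. Thus $\Gamma''$ is balanced and $\Gamma''\notin\mathbf{\Gamma}_{r,s}$ in every situation that can actually occur.

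Your assertion $x_1,x_2>0$ is also false here, since $v_a\in\{v_1,v_2\}$ has entry $0$. Lemma \ref{add} would tolerate that, but the balancedness is fatal. So the flip never applies, and the whole claim reduces to the bridge case, which you only sketch.

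\textbf{The sketch does not close that case.}
\begin{itemize}
\item For a pendant edge, the far side of the bridge is just $\{v_b\}$, whose entry is $0$, so there is no positive-entry vertex there to join to $v_a$.
\item Any new edge at $v_a$ or $v_b$ can close a negative $C_4$. For example, joining $v_b$ to $w$ creates $v_bwuv_av_b$ for every common neighbor $u$ of $w$ and $v_a$, and this is negative whenever $wu,uv_a$ are positive. So $C_4^-$-freeness is the entire difficulty, and it forces a specific choice of $w$.
\item Deleting $v_av_b$ cannot work either. Since $x_a=x_b=0$, $\mathbf{x}$ stays an eigenvector of $\Gamma^*-v_av_b$ for the same eigenvalue. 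The eigen-equations at all other vertices are unchanged, so no contradiction with Claim \ref{x2} can come from them.
\end{itemize}

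\textbf{The missing step is the paper's choice.} Take $v_a=v_2$ (by the reflection of $C$) and add $v_bv_t$.
\begin{itemize}
\item $N_{\Gamma^*}(v_2)\cap N_{\Gamma^*}(v_t)=\{v_1\}$, since another common neighbor would give a $C_4^-$ through $v_1v_2$.
\item Together with $N_{\Gamma^*}(v_b)=\{v_2\}$, this means the only new $C_4$ is $v_bv_tv_1v_2v_b$. It contains the two negative edges $v_1v_2$ and $v_2v_b$, so it is positive.
\item $C$ survives and $x_t>0$, so Lemma \ref{add} contradicts maximality.
\end{itemize}
The eigen-equation observation above also shows that the paper's other subcases, $3\le a\le t/2+1$ and Case 2, cannot occur at all.
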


\begin{proof} Suppose that this is not the case. By Claim \ref{zjia}, 
There is a unique negative edge,  say $v_av_b$ outside $C$. 
By Claim \ref{nz}, we have $x_a=x_b=0$.

Recall that $v_1v_2$ is the unique negative edge on $C$.

\noindent
{\bf Case 1.} $\{v_a,v_b\}\cap V(C)\ne\emptyset$.

Assume that $v_a\in V(C)$ with $2 \le a \le \frac{t}{2}+1$.

Suppose first that $a=2$. Then $v_bv_t\notin E(G)$ by Claim \ref{nout}.
Note that $v_b$ and $v_t$ are in different partite sets and $C$ remains to be a negative cycle in $\Gamma^*+v_bv_t$. If $\Gamma^*+v_bv_t$ contains a negative $C_4$, then it contains $v_bv_t$ and a negative edge, which must be $v_2v_b$. So $v_2$ and $v_t$ have a common neighbor different from $v_1$, say $u$, then $v_1v_2uv_tv_1$ is a negative $C_4$ in $\Gamma^*$, a contradiction.
Thus  $\Gamma^*+v_bv_t\in \mathbf{\Gamma}_{r, s}$.
By Claims \ref{x2} and the fact that $x_a=x_b=0$, we have $x_t\ne 0$ .
So by Lemma \ref{add}, we have $\lambda_1(\Gamma^*+v_bv_t)>\lambda_1(\Gamma^*)$, a contradiction.

Suppose next that $3\le a \le \frac{t}{2}+1$. By Claim \ref{x2}, we have $x_i>0$ for $i=1, 2$.

Suppose that $a$ is odd. 
Let $\Gamma'=\Gamma^*-v_av_b+v_1v_b$. Obviously, $C$ is also a negative cycle of $\Gamma'$.
If $\Gamma'$ contains some $C_4^-$, this $C_4$ contains $v_1v_2$ and $v_1v_b$, so  $v_2$ and $v_b$ have a common neighbor different from $v_1$, say $u_1$. 
Then $u_1\ne v_3$ which is clear if $a=3$, and as otherwise, $v_3\dots v_av_bv_3$ is a shorter negative cycle than $C$ in $\Gamma^*$ for $a\ge 5$, a contradiction.
So $v_2v_3\dots v_av_bu_1v_2$ is a negative cycle in $\Gamma^*$, which is a negative $C_4$ if $a=3$, and  a shorter negative cycle than $C$ if $a\ge 5$, a contradiction. Thus $\Gamma'\in \mathbf{\Gamma}_{r, s}$. By Rayleigh's principle, we have 
\[
0\ge \lambda_1(\Gamma')-\lambda_1(\Gamma^*)\ge\mathbf{x}^\top(A(\Gamma')-A(\Gamma^*))\mathbf{x}
=2x_b(x_1+x_a)=0,
\]
so $\lambda_1(\Gamma')=\lambda_1(\Gamma^*)$ and $\mathbf{x}$ is also an eigenvector associated with $\lambda_1(\Gamma')$. Recall that $x_1>0$. Then 
\[
0< x_1=x_1+x_a=\left(\lambda_1(\Gamma')-\lambda_1(\Gamma^*)\right)x_b=0,
\]
a contradiction.

Suppose that $a$ is even.
We claim that $v_1$ and $v_b$ have no common neighbors. Otherwise, let $u_2\in N_{\Gamma^*}(v_1)\cap N_{\Gamma^*}(v_b)$. If $u_2=v_t$, then $v_bv_a\dots v_tv_b$
is a shorter negative cycle than $C$ in $\Gamma^*$, a contradiction. So $u_2\ne v_t$.
If $a\ge 6$, then  $v_bv_a\dots v_tv_1u_2v_b$  is a shorter negative cycle than $C$ in $\Gamma^*$, also a contradiction. If $a=4$, then $\Gamma^*-v_1v_2\in \mathbf{\Gamma}_{r, s}$ as $v_4\dots v_tv_1u_2v_bv_4$ is a negative cycle in $\Gamma^*-v_1v_2$, and we have by Lemma \ref{add} that $\lambda_1(\Gamma^*-v_1v_2)>\lambda_1(\Gamma^*)$, a contradiction.
It follows that $v_1$ and $v_b$ have no common neighbors, as claimed.
Let $\Gamma''=\Gamma^*-v_av_b+v_2v_b$, which is unbalanced as $C$ is also a negative cycle of $\Gamma''$.
Note that  $\Gamma''$ contains no $C_4^-$ as $v_1$ and $v_b$ have no common neighbors different from $v_2$.
So $\Gamma''\in \mathbf{\Gamma}_{r, s}$.  By Rayleigh's principle, we have 
\[
0\ge \lambda_1(\Gamma'')-\lambda_1(\Gamma^*)\ge2x_b(x_2+x_a)=0, 
\]
so $\lambda_1(\Gamma'')=\lambda_1(\Gamma^*)$ and $\mathbf{x}$ is also a eigenvector associated with $\lambda_1(\Gamma'')$. Recall that $x_2>0$. Then 
\[
0<x_2=x_2+x_a=\left(\lambda_1(\Gamma'')-\lambda_1(\Gamma^*)\right)x_b=0,
\]
a contradiction.

\noindent
{\bf Case 2.} $\{v_a,v_b\}\cap V(C)=\emptyset$.

By Claims \ref{x2}, $x_i>0$ for $i=1,\dots, t$

Assume that $v_1$ and $v_b$ are in the same partite set of $G$.

We claim that  $v_1$ is not adjacent to $v_a$. Suppose that this is not true.  Then $v_b$ is not adjacent to $v_2$ by Claim \ref{nout}. Note that $\Gamma^*+v_2v_b$ contains no $C_4^-$ as otherwise $v_1$ and $v_b$  have a common neighbor different from $v_a$, say $u'$, 
or $v_2$ and $v_a$ have a common neighbor different from  $v_b$, say $u''$, so $v_1v_av_bu'v_1$ or $v_1v_2u''v_av_1$ is a negative $C_4$ in $\Gamma^*$, a contradiction.
So $\Gamma^*+v_2v_b\in \mathbf{\Gamma}_{r, s}$. 
By Lemma \ref{add} in view of $x_2>0$,  $\lambda_1(\Gamma^*+v_2v_b)>\lambda_1(\Gamma^*)$, a contradiction. 
So $v_1$ is not adjacent to $v_a$, as claimed.

Similarly as above by considering $\Gamma^*+v_1v_a$, $v_2$ is not adjacent to $v_b$.

Suppose that $v_1$ and $v_b$ ($v_2$ and $v_a$, respectively) have a common neighbor, say $w$ ($z$, respectively).
Then $\Gamma^*-v_1v_2$ is unbalanced as it has a negative cycle $C'$, where 
\[
C'=\begin{cases}v_2\dots v_tv_1wv_bv_azv_2 & \text{if $w\ne v_t$ and $z\ne v_3$},\\ v_2\dots v_tv_bv_azv_2 &  \text{if $w=v_t$ and $z\ne v_3$},\\
v_3\dots v_tv_1wv_bv_av_3 & \text{if $w\ne v_t$ and $z=v_3$},\\ 
v_3\dots v_tv_bv_av_3 & \text{if $w=v_t$ and $z=v_3$}.
\end{cases}
\]
So $\Gamma^*-v_1v_2\in \mathbf{\Gamma}_{r, s}$.
By Lemma \ref{add}, we have $\lambda_1(\Gamma^*-v_1v_2)>\lambda_1(\Gamma^*)$, a contradiction.
This shows that   $v_1$ and $v_b$ or $v_2$ and $v_a$, say $v_1$ and $v_b$ have no common neighbors.
Then it can be easily checked that $\Gamma^*-v_av_b+v_2v_b\in \mathbf{\Gamma}_{r, s}$.
By Rayleigh's principle, we have
\[
0\ge \lambda_1\left(\Gamma^*-v_av_b+v_2v_b\right)-\lambda_1\left(\Gamma^*\right)\ge 2x_b\left(x_2+x_a\right)=0,
\]
implying that $\lambda_1\left(\Gamma^*-v_av_b+v_2v_b\right)=\lambda_1\left(\Gamma^*\right)$ and $\mathbf{x}$ is also an eigenvector associated with $\lambda_1\left(\Gamma^*-v_av_b+v_2v_b\right)$.
But then
\[
0<x_2=x_2+x_a=\left(\lambda_1\left(\Gamma^*-v_av_b+v_2v_b\right)-\lambda_1\left(\Gamma^*\right)\right)x_b=0,
\]
a contradiction.
\end{proof}

\begin{Claim}\label{xi}
$x_i>0$ for $i=3,\dots, n$.
\end{Claim}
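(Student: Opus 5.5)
We need to show that every entry of $\mathbf{x}$ outside $\{v_1,v_2\}$ is positive. At this stage $v_1v_2$ is the unique negative edge of $\Gamma^*$ (Claims \ref{one} and \ref{non}), $G$ is connected (Claim \ref{con}), and $\mathbf{x}$ has at most two zero entries (Claim \ref{x2}). We argue by contradiction: suppose $x_k=0$ for some $k\ge 3$.

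The eigen-equation at $v_k$ reads
\[
\lambda_1(\Gamma^*)x_k=\sum_{v_j\in N(v_k)}\sigma^*(v_kv_j)x_j .
\]

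\emph{Case $v_k\notin\{v_1,v_2\}$ and not adjacent to the negative edge.} Every edge at $v_k$ is then positive. The equation gives $0=\sum_{v_j\in N(v_k)} x_j$, and since all $x_j\ge 0$, every neighbour of $v_k$ has zero entry. Because $G$ is connected and $v_k$ lies on the bipartite side opposite to some vertex, $v_k$ has at least one neighbour, say $v_j$, with $x_j=0$. If $v_k$ has two neighbours, we already have three zero entries, contradicting Claim \ref{x2}. If $N(v_k)=\{v_j\}$, apply the same equation at $v_j$ (when $v_j\notin\{v_1,v_2\}$). All neighbours of $v_j$ then vanish, giving a third zero unless $v_j$ is a leaf too, which would make $G=K_2$ and contradict connectivity with $n\ge 7$.

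\emph{Case $v_k$ adjacent to $v_1$ or $v_2$ through the negative edge.} The only such vertices are $v_1,v_2$ themselves, so there is no extra sign cancellation for $k\ge 3$. Hence the worry is only that a neighbour $v_j\in\{v_1,v_2\}$ appears in the propagation above. Here the equation at $v_1$, namely
\[
\lambda_1 x_1=-x_2+\sum_{\text{other }N(v_1)}x_j ,
\]
no longer forces the neighbours of $v_1$ to vanish. I would handle this directly: every vertex of $C$ other than $v_1,v_2$ has only positive edges, so the argument of the first case applies verbatim to $v_3,\dots,v_t$. For vertices off $C$, if $x_k=0$, all neighbours $w$ of $v_k$ satisfy $x_w=0$. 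Since $x_k=0$ and $x_w=0$ already give two zeros, any further zero neighbour contradicts Claim \ref{x2}. So $v_k$ has a single neighbour $w$, which must be $v_1$ or $v_2$ (otherwise propagate as before).

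\emph{Main obstacle: the pendant case.} This is the residual situation where $v_k$ is pendant at, say, $v_1$, and $x_1=x_k=0$. I would use Lemma \ref{add}. Choose $v_3$, which is on the same side as $v_1$, so it can be joined to $v_k$; we know $x_3>0$ by the first part. Adding the edge $v_kv_3$ creates no $C_4^-$, since $v_k$ has degree $1$ and the new edge lies on no $4$-cycle containing $v_1v_2$ unless $v_k\sim v_2$, which is impossible. The cycle $C$ survives, so $\Gamma^*+v_kv_3\in\mathbf{\Gamma}_{r,s}$. Lemma \ref{add} then gives a strictly larger index, a contradiction. If instead the vertex involved is $v_2$, use $v_t$ or $v_4$ symmetrically, picking the partner in the correct part and off $v_1$'s neighbourhood. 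This concludes $x_i>0$ for all $i\ge 3$.
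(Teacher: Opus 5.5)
Your reduction is correct and matches the paper. For $k\ge 3$ every edge at $v_k$ is positive, so $x_k=0$ forces every neighbour of $v_k$ to vanish, and Claims \ref{x2} and \ref{con} then make $v_k$ a pendant vertex. When the neighbour $v_j$ is not in $\{v_1,v_2\}$, you apply the eigen-equation again at $v_j$: a third zero appears unless $\{v_k,v_j\}$ is a whole component. That is a valid and slightly more elementary substitute for the paper's edge addition in that subcase.

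The gap is in the step you flagged as the main obstacle. Suppose $v_k$ is pendant at $v_1$ and you add $v_kv_3$. Then $v_kv_1v_2v_3v_k$ is a $4$-cycle whose only negative edge is $v_1v_2$, i.e.\ a $C_4^-$. So $\Gamma^*+v_kv_3\notin\mathbf{\Gamma}_{r,s}$, and Lemma \ref{add} gives no contradiction. Your justification (``no $4$-cycle containing $v_1v_2$ unless $v_k\sim v_2$'') misses exactly this cycle, which only needs $v_k\sim v_1$ and $v_2\sim v_3$. The same cycle rules out $v_t$ in the $v_2$ case, despite your ``symmetrically''.

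The criterion you half-state for $v_2$ is the right one. Let $u$ lie on the side of $v_1$. Every new $4$-cycle through an added edge $v_ku$ has the form $v_kuwv_1v_k$ with $w\in N(u)\cap N(v_1)$. Since $v_1v_2$ is the only negative edge, this cycle is negative if and only if $w=v_2$, so $u$ must avoid $N(v_2)$.

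Taking $u=v_{t-1}$ works. It is not adjacent to $v_2$ because $C$ is induced and $t\ge 6$, and $x_{t-1}>0$ by Claim \ref{x2}. Symmetrically, $v_4$ works for a pendant at $v_2$. This is what the paper does: it uses $v_4$ when the neighbour is $v_2$. It avoids the pendant-at-$v_1$ case by assuming $x_1\ge x_2$, since then $x_1=0$ would force $x_2=0$ and a third zero entry.
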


\begin{proof}
Suppose that  $x_i=0$ for some  $i=3,\dots, n$.
By Claim \ref{non}, $v_1v_2$ is the unique negative of $\Gamma^*$.
Then
\[
\sum_{v_k\in N_{\Gamma^*}(v_i)}x_k=\sum_{v_k\in N_{\Gamma^*}(v_i)}\sigma^*(v_iv_k)x_k=\lambda_1(\Gamma^*)x_i=0,
\]
so $x_k=x_i=0$ for every  $v_k\in N_{\Gamma^*}(v_i)$. Thus $\mathbf{x}$ have at least  $|N_{\Gamma^*}(v_i)|+1$ zero entries. By  Claim \ref{x2}, $|N_{\Gamma^*}(v_i)|\le 1$. 
By  Claim \ref{con},
$|N_{\Gamma^*}(v_i)|=1$. 
 So $v_i$ lies outside $C$.
By Claim \ref{x2} again, we have $x_j>0$ for any $j\in \{1,\dots, n\}\setminus\{i, s\}$.
Assume that $x_1\ge x_2$. Then $s\ne1$ and $x_1>0$.
Let
\[
\Gamma'=\begin{cases}
\Gamma^*+v_5v_i & \mbox{if $s=3$, }\\
\Gamma^*+v_2v_i & \mbox{if $s=4$, }\\
\Gamma^*+v_4v_i & \mbox{if $v_1$ and $v_s$ are in different partition of $G$, and $s\ne 4$ }\\
\Gamma^*+v_3v_i & \mbox{otherwise}.
\end{cases}
\]
Clearly, $\Gamma'\in \mathbf{\Gamma}_{r, s}$.
By Lemma \ref{add}, we have $\lambda_1(\Gamma')>\lambda_1(\Gamma^*)$, a contradiction.
\end{proof}

Recall that $t$ is even.
If  $t\ge8$,
then $\Gamma^*+v_3v_{t-2}\in \mathbf{\Gamma}_{r, s}$, and we have by Lemmas \ref{add} and \ref{xi} that $\rho(\Gamma^*+v_3v_{t-2})>\rho(\Gamma^*)$, a contradiction. It follows that $t=6$,
so $C=v_1\dots v_6v_1$.

As $\Gamma^*$ contains no  $C_4^-$, $N_{\Gamma^*}(v_2)\cap N_{\Gamma^*}(v_6)=\{v_1\}$ and $N_{\Gamma^*}(v_1)\cap N_{\Gamma^*}(v_3)=\{v_2\}$.
Let $X_1=N_{\Gamma^*}(v_2)\setminus\{v_1, v_3\}$, $X_2=N_{\Gamma^*}(v_6)\setminus\{v_1, v_5\}$, $Y_1=N_{\Gamma^*}(v_1)\setminus\{v_2, v_6\}$ and $Y_2=N_{\Gamma^*}(v_3)\setminus\{v_2, v_4\}$.  Evidently, $X_1\cap X_2=\emptyset$ and $Y_1\cap Y_2=\emptyset$.

For any $u\in X_1$ and $w\in Y_1$, $u$ is not adjacent to $w$ as otherwise $v_1wuv_2v_1$ is a negative $C_4$, a contradiction.
Then by Claim \ref{xi} and Lemma \ref{add}, we have $X_1\cup X_2\subset N_{\Gamma^*}(v_4)$, $Y_1\cup Y_2\subset N_{\Gamma^*}(v_5)$, and for any $u\in X_1$, $v\in X_2$, $w\in Y_1$ and $z\in Y_2$, $u$ is adjacent to $z$, $v$ is adjacent to $w$ and $z$.

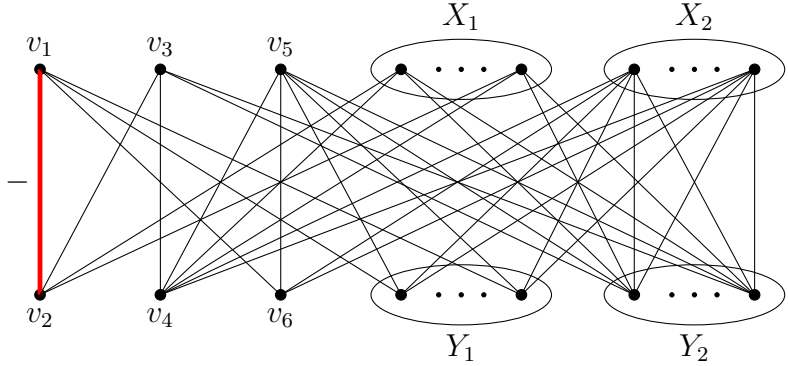
\begin{figure}[h]
	\centering
	\begin{tikzpicture}
		\filldraw [black] (1,-0.5) circle (2pt);
		\filldraw [black] (2.6,-0.5) circle (2pt);
		\filldraw [black] (4.2,-0.5) circle (2pt);
		\filldraw [black] (5.8,-0.5) circle (2pt);
		\filldraw [black] (6.3,-0.5) circle (0.8pt);
		\filldraw [black] (6.6,-0.5) circle (0.8pt);
		\filldraw [black] (6.9,-0.5) circle (0.8pt);
		\filldraw [black] (7.4,-0.5) circle (2pt);
		\filldraw [black] (8.9,-0.5) circle (2pt);
		\filldraw [black] (9.4,-0.5) circle (0.8pt);
		\filldraw [black] (9.7,-0.5) circle (0.8pt);
		\filldraw [black] (10.0,-0.5) circle (0.8pt);
		\filldraw [black] (10.5,-0.5) circle (2pt);
		
		\filldraw [black] (1,-3.5) circle (2pt);
		\filldraw [black] (2.6,-3.5) circle (2pt);
		\filldraw [black] (4.2,-3.5) circle (2pt);
		\filldraw [black] (5.8,-3.5) circle (2pt);
		\filldraw [black] (6.3,-3.5) circle (0.8pt);
		\filldraw [black] (6.6,-3.5) circle (0.8pt);
		\filldraw [black] (6.9,-3.5) circle (0.8pt);
		\filldraw [black] (7.4,-3.5) circle (2pt);
		\filldraw [black] (8.9,-3.5) circle (2pt);
		\filldraw [black] (9.4,-3.5) circle (0.8pt);
		\filldraw [black] (9.7,-3.5) circle (0.8pt);
		\filldraw [black] (10.0,-3.5) circle (0.8pt);
		\filldraw [black] (10.5,-3.5) circle (2pt);
		\draw  [red, line width=1.8pt] (1,-0.5)--(1,-3.5);
		\node at (0.7,-2) {$-$};
		\draw  [black](1,-3.5)--(2.6,-0.5)--(2.6,-3.5)--(4.2,-0.5)--(4.2,-3.5)--(1,-0.5);
		\draw  [black](1,-0.5)--(5.8,-3.5);
		\draw  [black](1,-0.5)--(7.4,-3.5);
		\draw  [black](2.6,-0.5)--(8.9,-3.5);
		\draw  [black](2.6,-0.5)--(10.5,-3.5);
		\draw  [black](4.2,-0.5)--(5.8,-3.5);
		\draw  [black](4.2,-0.5)--(7.4,-3.5);
		\draw  [black](4.2,-0.5)--(8.9,-3.5);
		\draw  [black](4.2,-0.5)--(10.5,-3.5);
		\draw  [black](1,-3.5)--(5.8,-0.5);
		\draw  [black](1,-3.5)--(7.4,-0.5);
		\draw  [black](4.2,-3.5)--(8.9,-0.5);
		\draw  [black](4.2,-3.5)--(10.5,-0.5);
		\draw  [black](2.6,-3.5)--(5.8,-0.5);
		\draw  [black](2.6,-3.5)--(7.4,-0.5);
		\draw  [black](2.6,-3.5)--(8.9,-0.5);
		\draw  [black](2.6,-3.5)--(10.5,-0.5);
		\draw (6.6,-0.5) ellipse (1.2 and 0.4);
		\draw (9.7,-0.5) ellipse (1.2 and 0.4);
		\draw (6.6,-3.5) ellipse (1.2 and 0.4);
		\draw (9.7,-3.5) ellipse (1.2 and 0.4);
		\draw  [black] (5.8, -0.5)--(8.9, -3.5);
		\draw  [black] (5.8, -0.5)--(10.5, -3.5);
		\draw  [black] (7.4, -0.5)--(8.9, -3.5);
		\draw  [black] (7.4, -0.5)--(10.5, -3.5);
		\draw  [black] (5.8, -3.5)--(8.9, -0.5);
		\draw  [black] (5.8, -3.5)--(10.5, -0.5);
		\draw  [black] (7.4, -3.5)--(8.9, -0.5);
		\draw  [black] (7.4, -3.5)--(10.5, -0.5);
		\draw  [black] (8.9, -0.5)--(8.9, -3.5);
		\draw  [black] (8.9, -0.5)--(10.5, -3.5);
		\draw  [black] (10.5, -0.5)--(8.9, -3.5);
		\draw  [black] (10.5, -0.5)--(10.5, -3.5);
		\node at (6.6, 0.2) {$X_1$};
		\node at (9.7, 0.2) {$X_2$};
		\node at (6.6, -4.2) {$Y_1$};
		\node at (9.7, -4.2) {$Y_2$};
		\node at (1, -0.2) {$v_1$};
		\node at (1, -3.8) {$v_2$};
		\node at (2.6, -0.2) {$v_3$};
		\node at (2.6, -3.8) {$v_4$};
		\node at (4.2, -0.2) {$v_5$};
		\node at (4.2, -3.8) {$v_6$};
	\end{tikzpicture}
	
	\caption{The structure of $\Gamma^*$.}
	\label{f3}
\end{figure}

Let $X$ and $Y$ be the bipartite sets of $G$ with $v_1\in X$. Then $X_1\cup X_2\cup\{v_1,v_3,v_5\}\subseteq X$.
Suppose that $v'\in X\setminus \left(X_1\cup X_2\cup\{v_1,v_3,v_5\}\right)$. Let $\Gamma'=\Gamma^*+v_2v'$ if $v'\notin N_\Gamma^*(v)$ for any $v\in Y_1$, and $\Gamma'=\Gamma^*+v_6v'$ otherwise. Clearly, $\Gamma'\in \mathbf{\Gamma}_{r, s}$. By Claim \ref{xi}, $x_{v'}>0$. So
by Lemma \ref{add} , we have $\lambda_1(\Gamma')>\lambda_1(\Gamma^*)$, a contradiction.
So $X=X_1\cup X_2\cup\{v_1,v_3,v_5\}$.
Similarly, we have $Y=Y_1\cup Y_2\cup\{v_2,v_4,v_6\}$.
One can see in Fig. \ref{f3}  the rough structure of $\Gamma^*$. 
Note that $X_1\cup X_2\cup Y_1\cup Y_2\ne\emptyset$ as $n\ge7$.

By symmetry, we have $x_3=x_k$ for any $v_k\in X_1$, $x_5=x_k$ for any $v_k\in X_2$, $x_4=x_k$ for any $v_k\in Y_2$ and $x_6=x_k$ for any $v_k\in Y_1$.

\begin{Claim}\label{l0l}
$|Y_1|\le|Y_2|$ and $|X_1|\le|X_2|$.
\end{Claim}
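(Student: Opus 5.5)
The plan is to prove $|Y_1|\le|Y_2|$ by a local switching of edges. The inequality $|X_1|\le|X_2|$ should then follow from the reflection of $C$ that fixes the edge $v_1v_2$: it interchanges $v_1\leftrightarrow v_2$, $v_3\leftrightarrow v_6$, $v_4\leftrightarrow v_5$, and therefore interchanges $X_1\leftrightarrow Y_1$ and $X_2\leftrightarrow Y_2$ in the description of Fig.~\ref{f3}.

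The basic operation moves one vertex between $Y_1$ and $Y_2$. For $w\in Y_1$, let $\Gamma'$ be obtained from $\Gamma^*$ by deleting $v_1w$ and adding $v_3w$ together with all edges $uw$, $u\in X_1$. Then $w$ acquires exactly the neighbourhood of a vertex of $Y_2$, namely $\{v_3,v_5\}\cup X_1\cup X_2$. So $\Gamma'$ has the same shape as Fig.~\ref{f3}, with $Y_1,Y_2$ replaced by $Y_1\setminus\{w\}$ and $Y_2\cup\{w\}$. $C$ is untouched, and the only negative edge is still $v_1v_2$. A negative $C_4$ would have to pass through $v_1v_2$, and one checks that $N(v_1)\cap N(v_3)=\{v_2\}$ and $N(v_2)\cap N(v_6)=\{v_1\}$ still hold, so there is none. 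Hence $\Gamma'\in\mathbf{\Gamma}_{r,s}$. Using the equalities $x_w=x_6$ and $x_u=x_3$ $(u\in X_1)$ established above, Rayleigh's principle gives
\[
0\ge\lambda_1(\Gamma')-\lambda_1(\Gamma^*)\ge 2x_6\bigl((1+|X_1|)x_3-x_1\bigr).
\]
If equality held, $\mathbf{x}$ would be an eigenvector of $\Gamma'$. Comparing the eigen-equations at $v_1$ would then give $x_w=0$, contradicting Claim~\ref{xi}. So the inequality is strict, and $x_1>(1+|X_1|)x_3$. The reverse move of a vertex from $Y_2$ to $Y_1$ gives, in the same way, $x_1<(1+|X_1|)x_3$. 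Both moves cannot be available at once, so $Y_1=\emptyset$ or $Y_2=\emptyset$.

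It remains to exclude $Y_2=\emptyset\ne Y_1$. In that case we know $x_1>(1+|X_1|)x_3$, and I would derive a contradiction from the eigen-equations of the equitable quotient (as in Lemma~\ref{comp}). The relevant equations are
\[
\lambda x_1=-x_2+(1+|Y_1|)x_6,\qquad \lambda x_3=x_2+x_4,
\]
\[
\lambda x_6=x_1+(1+|X_2|)x_5,\qquad \lambda x_2=-x_1+(1+|X_1|)x_3,
\]
together with those for $v_4$ and $v_5$. The equation at $v_2$ already shows $\lambda x_2=(1+|X_1|)x_3-x_1<0$, contradicting $x_2>0$ from Claim~\ref{xi}. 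So this case is in fact immediate.

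The step I expect to need the most care is verifying that the modified graph $\Gamma'$ contains no $C_4^-$ and stays unbalanced, in both directions of the move. This needs a precise list of all neighbourhoods in Fig.~\ref{f3}, in particular that no vertex of $X_1$ is adjacent to $v_1$'s new or old neighbours in a way that creates a $4$-cycle through $v_1v_2$.
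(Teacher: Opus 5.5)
Your proof is correct and follows essentially the paper's route. Both arguments use the same rewiring of a $Y_1$-type vertex into a $Y_2$-type vertex, the same Rayleigh bound $2x_6\bigl((1+|X_1|)x_3-x_1\bigr)$, and the same eigen-equation at $v_1$ to rule out equality; the only difference in the move is that you rewire some $w\in Y_1$ and keep $C$ intact, while the paper rewires $v_6$ and uses a vertex of $Y_1$ to keep a negative hexagon.

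Your reverse move from $Y_2$ to $Y_1$ is unnecessary. By the equation at $v_2$ and Claim~\ref{xi}, $(1+|X_1|)x_3-x_1=\lambda_1(\Gamma^*)x_2>0$, so the first move alone strictly increases $\lambda_1$ whenever $Y_1\neq\emptyset$. Hence $Y_1=\emptyset$, and symmetrically $X_1=\emptyset$.
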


\begin{proof}
Note that 
\[
\lambda_1(\Gamma^*)x_2=x_3+\sum_{v_k\in X_1}x_k-x_1.
\]
Then 
\[
\left(|X_1|+1\right)x_3-x_1\ge0.
\]
We claim that $|Y_1|\le|Y_2|$. 
Suppose that $|Y_1|>|Y_2|$, implying that $|Y_1|\ge 1$. 
Let 
\[
\Gamma'=\Gamma^*-v_1v_6+\{v_6v: v\in X_1\cup\{v_3\}\}.
\]
It is easy to see that $\Gamma'\in \mathbf{\Gamma}_{r, s}$. 
By Rayleigh's principle, 
\[
0\ge \lambda_1(\Gamma')-\lambda_1(\Gamma^*)\ge 2x_6\left((|X_1|+1)x_3-x_1\right)\ge 0, 
\]
so $\lambda_1(\Gamma')=\lambda_1(\Gamma^*)$ and $\mathbf{x}$ is also an eigenvector associated with $\lambda_1(\Gamma')$. But then 
\[
0=\left(\lambda_1(\Gamma')-\lambda_1(\Gamma^*)\right)x_1=-x_6, 
\]	
i.e., $x_6=0$, contradicting  Claim \ref{xi}. Thus we indeed have $|Y_1|\le|Y_2|$.

Similarly, we have $|X_1|\le|X_2|$ by constructing $\Gamma''=\Gamma^*-v_2v_3+\{v_3v: v\in Y_1\cup\{v_6\}\}$ as  $\left(|Y_1|+1\right)x_6-x_2=\lambda_1(\Gamma^*)x_1\ge0$. 
\end{proof}

\begin{Claim}\label{ll}
	$x_3\ge x_1$ and $x_6\ge x_2$.
\end{Claim}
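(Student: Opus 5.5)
The plan is to compare the eigen-equations of $\mathbf{x}$ at the relevant vertices, using the explicit structure of $\Gamma^*$ (Fig. \ref{f3}) and the equalities $x_3=x_k$ ($v_k\in X_1$), $x_5=x_k$ ($v_k\in X_2$), $x_4=x_k$ ($v_k\in Y_2$), $x_6=x_k$ ($v_k\in Y_1$). I will also use Claim \ref{l0l} ($|Y_1|\le|Y_2|$, $|X_1|\le|X_2|$) and Claim \ref{xi} (all entries positive, in particular $x_1,x_2>0$). Write $\lambda=\lambda_1(\Gamma^*)$.

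First I read off the neighbourhoods from the established structure. Here $v_1v_2$ is the only negative edge, and:
\begin{itemize}
\item $N(v_1)=\{v_2,v_6\}\cup Y_1$ and $N(v_3)=\{v_2,v_4\}\cup Y_2$;
\item $N(v_5)=\{v_4,v_6\}\cup Y_1\cup Y_2$;
\item $N(v_2)=\{v_1,v_3\}\cup X_1$ and $N(v_6)=\{v_1,v_5\}\cup X_2$;
\item $N(v_4)=\{v_3,v_5\}\cup X_1\cup X_2$.
\end{itemize}
This gives
\begin{align*}
\lambda x_1&=-x_2+(|Y_1|+1)x_6, & \lambda x_3&=x_2+(|Y_2|+1)x_4,\\
\lambda x_2&=-x_1+(|X_1|+1)x_3, & \lambda x_6&=x_1+(|X_2|+1)x_5,\\
\lambda x_4&=(|X_1|+1)x_3+(|X_2|+1)x_5, & \lambda x_5&=(|Y_1|+1)x_6+(|Y_2|+1)x_4.
\end{align*}

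For $x_3\ge x_1$, I subtract the first two equations. This gives $\lambda(x_3-x_1)=2x_2+(|Y_2|+1)x_4-(|Y_1|+1)x_6$. By Claim \ref{l0l} this is at least $2x_2+(|Y_1|+1)(x_4-x_6)$. The key observation is that $\lambda(x_4-x_6)=(|X_1|+1)x_3-x_1=\lambda x_2>0$. Hence $x_4>x_6$, and so $\lambda(x_3-x_1)>0$, i.e. in fact $x_3>x_1$.

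Symmetrically, for $x_6\ge x_2$, I write $\lambda(x_6-x_2)=2x_1+(|X_2|+1)x_5-(|X_1|+1)x_3$. Using $|X_1|\le|X_2|$ this is at least $2x_1+(|X_1|+1)(x_5-x_3)$. Moreover $\lambda(x_5-x_3)=(|Y_1|+1)x_6-x_2=\lambda x_1>0$, which gives $x_6>x_2$.

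The only delicate point is verifying the adjacency lists, especially that the classes $X_1,X_2,Y_1,Y_2$ have exactly the neighbourhoods of $v_3,v_5,v_6,v_4$ respectively. This was established just before Fig. \ref{f3}, including the non-adjacency of $X_1$ and $Y_1$. Once that is granted, the argument is a two-line computation.
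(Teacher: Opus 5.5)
Your argument is correct and is essentially the paper's proof. Both subtract the eigen-equations at $v_1$ and $v_3$ (resp.\ $v_2$ and $v_6$), invoke Claim~\ref{l0l}, and use the identity $x_4-x_6=x_2$ (resp.\ $x_5-x_3=x_1$) obtained from the remaining eigen-equations. Your equation for $\lambda x_2$ is in fact the correct one; the paper's version carries a spurious $x_5$ term.

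One small caveat: Claim~\ref{xi} only gives $x_i>0$ for $i\ge 3$, so your strict inequalities $x_3>x_1$ and $x_6>x_2$ are not justified as written. The nonnegativity of $x_1,x_2$ is all the stated non-strict claim needs, so this does not affect the result.
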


\begin{proof}	
Note that 
\begin{align*}
\lambda_1(\Gamma^*)x_2&=-x_1+x_3+x_5+\sum_{v_k\in X_1}x_k,\\
\lambda_1(\Gamma^*)x_6&=x_1+x_5+\sum_{v_k\in X_2}x_k,\\
\lambda_1(\Gamma^*)x_4&=x_3+x_5+\sum_{v_k\in X_1\cup X_2}x_k.
\end{align*}
Then 
\[
\lambda_1(\Gamma^*)(x_2+x_6)=\lambda_1(\Gamma^*)x_4
\] 
By Claim \ref{xi}, we have $x_2=x_4-x_6$. 
Then 
\begin{align*}
\lambda_1(\Gamma^*)(x_3-x_1)& =2x_2+\left(|Y_2|+1\right)x_4-\left(|Y_1|+1\right)x_6\\
&=2(x_4-x_6)+2\left(|Y_2|+1\right)x_4-\left(|Y_1|+1\right)x_6\\
& =\left(|Y_2|+3\right)x_4-\left(|Y_1|+3\right)x_6.
\end{align*}
By Claim \ref{l0l}, $|Y_2|\ge |Y_1|$, so 
\[
\lambda_1(\Gamma^*)(x_3-x_1) \ge \left(|Y_1|+3\right)(x_4-x_6)=\left(|Y_1|+3\right)x_2\ge 0.
\]
Thus  $x_3\ge x_1$. 

Similarly, we have  $x_1+x_3=x_5$, and by Claim \ref{l0l}, $|X_1|\le|X_2|$, so
$\lambda_1(\Gamma^*)(x_6-x_2)\ge 0$,  implying that $x_6\ge x_2$.  ????
\end{proof}

\begin{Claim}\label{fin}
	$|X_1|=|Y_1|=0$. 
\end{Claim}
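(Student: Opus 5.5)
The plan is to argue by contradiction, separately for $Y_1$ and $X_1$. If either set is nonempty, I move one of its vertices into the "larger" class ($Y_2$, respectively $X_2$). Claim \ref{ll} shows this does not decrease the Rayleigh quotient of $\mathbf{x}$. Then the eigenvalue equation at a single vertex shows that $\mathbf{x}$ cannot remain an eigenvector of the modified graph.

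Suppose first that $|Y_1|\ge 1$ and pick $w\in Y_1$. By the structure established above (Fig. \ref{f3}), $N_{\Gamma^*}(w)=\{v_1,v_5\}\cup X_2$, and $x_w=x_6$. Let
\[
\Gamma'=\Gamma^*-v_1w+\{wv: v\in X_1\cup\{v_3\}\},
\]
so that in $\Gamma'$ the vertex $w$ has neighborhood $\{v_3,v_5\}\cup X_1\cup X_2$, i.e. it looks exactly like a vertex of $Y_2$. I then check that $\Gamma'\in\mathbf{\Gamma}_{r,s}$:
\begin{itemize}
\item The cycle $C$ is untouched, so $\Gamma'$ is unbalanced.
\item The only negative edge of $\Gamma'$ is still $v_1v_2$, so a $C_4^-$ would have to pass through $v_1v_2$ and hence through a common neighbor of $v_2$ and of some neighbor of $v_1$ other than $v_2$.
\item The new edges run from $w$ to $\{v_3\}\cup X_1=N(v_2)\setminus\{v_1\}$, but $w$ is no longer adjacent to $v_1$. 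The previously established facts $N(v_1)\cap N(v_3)=\{v_2\}$ and "no edges between $X_1$ and $Y_1$" then rule out any new $C_4$ through $v_1v_2$.
\end{itemize}
This verification is the step I expect to need the most care.

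Next comes the Rayleigh estimate. Using $x_w=x_6>0$ and Claim \ref{ll},
\[
\lambda_1(\Gamma')-\lambda_1(\Gamma^*)\ge \mathbf{x}^\top(A(\Gamma')-A(\Gamma^*))\mathbf{x}=2x_6\bigl((|X_1|+1)x_3-x_1\bigr)\ge 2x_6(x_3-x_1)\ge 0 .
\]
By maximality of $\Gamma^*$ all these inequalities are equalities. Hence $\mathbf{x}$ is an eigenvector of $\Gamma'$ for $\lambda_1(\Gamma')=\lambda_1(\Gamma^*)$. Comparing the eigen-equations of $\Gamma'$ and $\Gamma^*$ at $v_1$, the only change is the removal of the term $x_w$, which gives
\[
0=(\lambda_1(\Gamma')-\lambda_1(\Gamma^*))x_1=-x_w=-x_6 .
\]
This contradicts Claim \ref{xi}, so $Y_1=\emptyset$.

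The case of $X_1$ is symmetric. Take $u\in X_1$, so $N(u)=\{v_2,v_4\}\cup Y_2$ and $x_u=x_3$. Form
\[
\Gamma''=\Gamma^*-v_2u+\{uv: v\in Y_1\cup\{v_6\}\},
\]
in which $u$ becomes an $X_2$-type vertex. The membership check $\Gamma''\in\mathbf{\Gamma}_{r,s}$ goes the same way, now using $N(v_2)\cap N(v_6)=\{v_1\}$. The Rayleigh gain is
\[
2x_3\bigl((|Y_1|+1)x_6-x_2\bigr)\ge 2x_3(x_6-x_2)\ge 0
\]
by Claim \ref{ll}. Equality forces $\mathbf{x}$ to be an eigenvector of $\Gamma''$, and the eigen-equation at $v_2$ changes only by the removal of the term $x_u$, so it yields $x_u=x_3=0$, again contradicting Claim \ref{xi}. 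Hence $|X_1|=|Y_1|=0$.

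The potential weak points are twofold. The first is the $C_4^-$-freeness of the modified graphs. The second is the reliance on Claim \ref{ll}, whose second half ($x_6\ge x_2$) is only sketched as "similar" in the paper; I would verify it by the mirror computation using $x_5=x_1+x_3$ and $|X_1|\le|X_2|$ before relying on it.
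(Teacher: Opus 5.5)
Your proof is correct, but it is organized differently from the paper's. The paper splits into cases. If $X_1$ and $Y_1$ are both nonempty, it rewires everything at once: it moves $Y_1$ from $v_1$ to $v_3$ and $X_1$ from $v_2$ to $v_6$, and adds all $X_1$--$Y_1$ edges. This produces a copy of $\Gamma_{r,s}$ with a \emph{strict} Rayleigh gain coming from the term $2|X_1||Y_1|x_3x_6$. If exactly one of them is empty, it moves all of $Y_1$ (or $X_1$) and uses the equality/eigen-equation trick at $v_1$ (or $v_2$).

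You instead move a single vertex $w\in Y_1$ and let it also pick up the edges to $X_1$. This removes the case split, and the $C_4^-$ check becomes immediate: every new edge is at $w$, and $w$ is adjacent to neither $v_1$ nor $v_2$ in $\Gamma'$. Your construction is really the one from Claim \ref{l0l}, with $w$ in place of $v_6$. Since $w$ keeps $C$ intact, it also shows that Claim \ref{l0l}'s argument gives $Y_1=\emptyset$ outright.

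You also do not need Claim \ref{ll}. Since $N_{\Gamma^*}(v_2)\setminus\{v_1\}=\{v_3\}\cup X_1$, your gain $2x_w\bigl((|X_1|+1)x_3-x_1\bigr)$ equals $2x_w\lambda_1(\Gamma^*)x_2\ge 0$. Likewise the gain in your second step equals $2x_u\lambda_1(\Gamma^*)x_1\ge 0$. So nonnegativity of $\mathbf{x}$ and Claim \ref{xi} suffice, and your route bypasses Claims \ref{l0l} and \ref{ll} entirely. Your mirror check of $x_6\ge x_2$ via $x_5=x_1+x_3$ and $|X_1|\le|X_2|$ does go through, so relying on Claim \ref{ll} is also valid.

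One small misattribution: the facts $N(v_1)\cap N(v_3)=\{v_2\}$ and the absence of $X_1$--$Y_1$ edges do not rule out new negative $C_4$'s. What they guarantee is that the added edges are genuinely new, which the Rayleigh computation needs.
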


\begin{proof}
Suppose first that $|X_1|$, $|Y_1|\ge 1$. 
Let 
\begin{align*}
\widetilde\Gamma& =\Gamma^*-\{v_1v: v\in Y_1\}-\{v_2v: v\in X_1\}+\{v_3v: v\in Y_1\}\\
& \quad+\{v_6v: v\in X_1\}+\{uv: u\in X_1, v\in Y_1\}.
\end{align*}
It is easy to see that $\widetilde\Gamma$ is switching isomorphic  to $\Gamma_{r, s}\in \mathbf{\Gamma}_{r, s}$ (displayed in Fig.~\ref{f2}).
By Rayleigh's principle, and Claims \ref{ll} and \ref{xi}, we have 
\begin{align*}
0\ge \lambda_1(\widetilde\Gamma)-\lambda_1(\Gamma^*)& \ge 2(x_3-x_1)\sum_{v_k\in Y_1}x_k+2(x_6-x_2)\sum_{v_k\in X_1}x_k+2\sum_{v_k\in X_1}\sum_{v_ell\in Y_1}x_kx_\ell\\
& = 2\left(|Y_1|(x_3-x_1)x_6+|X_1|(x_6-x_2)x_3+|X_1||Y_1|x_3x_6\right)\\
& \ge 2|X_1||Y_1|x_3x_6\\
& >0,
\end{align*}
a constraction. 
So $|X_1|=0$ or $|Y_1|=0$.

Suppose next that $|X_1|=0$ and $|Y_1|\ne 0$, or $|X_1|\ne 0$ and  $|Y_1|=0$, say  $|X_1|=0$ and $|Y_1|\ne 0$. 
Let
\[
\dot{\Gamma}=\Gamma^*-\{v_1v: v\in Y_1\}+\{v_3v: v\in Y_1\}.
\]
We can see that $\dot{\Gamma}$ is switching isomorphic to $\Gamma_{r, s}$.
By Rayleigh's principle, and Claim \ref{ll}, we have  
\[
0\ge \lambda_1(\dot{\Gamma})-\lambda_1(\Gamma^*)\ge 2|Y_1|(x_3-x_1)x_6\ge 0, 
\]
so $\lambda_1(\dot\Gamma)=\lambda_1(\Gamma^*)$ and $\mathbf{x}$ is also an eigenvector associated with $\lambda_1(\dot\Gamma)$.
Note that 
\[
0=\left(\lambda_1(\dot{\Gamma})-\lambda_1(\Gamma^*)\right)x_1=-|Y_1|x_6, 
\]
implying that $x_6=0$, contradicting  Claim \ref{xi}. 
%
%
\end{proof}

By Claim \ref{fin},  $\Gamma^*$ is switching isomorphic to $\Gamma_{r, s}$, so  $\Gamma$ is switching isomorphic to $\Gamma_{r, s}$, and  we have by Lemmas \ref{same}  and \ref{comp} (i) that 
\begin{align*}
\lambda_1(\Gamma)&=\lambda_1(\Gamma^*)=\lambda_1(\Gamma_{r, s})\\
&= \sqrt{ \frac{1}{2}\left((r-1)(s-1)+2+\sqrt{((r-1)(s-1)+2)^2-4(2r-3)(2s-3)}\right)}\,, 
\end{align*}
as desired.
\end{proof}

\section{Proof of Theorem \ref{N2}}

\begin{proof}[Proof of Theorem~\ref{N2}]
Let $\Gamma=(G,\sigma)$ be an unbalanced signed bipartite graph of order $n$  containing no $C_4^-$ that maximizes the spectral radius. As $G$ is bipartite, we have  $\rho(\Gamma)=\lambda_1(\Gamma)$. 
Denote by $r$ and $n-r$ the partite sizes of $G$ with $r\le n-r$.
By Theorem \ref{N1},  
$\Gamma$ is switching isomorphic to $\Gamma_{r,n-r}$, and 
\[
\rho(\Gamma)=f(r,s):=\sqrt{ \frac{1}{2}\left((r-1)(s-1)+2+\sqrt{((r-1)(s-1)+2)^2-4(2r-3)(2s-3)}\right)}.
\]
By Lemma \ref{comp} (iii), we have $r=\left\lfloor\frac{n}{2}\right\rfloor$. The result follows.
\end{proof}

\bigskip

\noindent {\bf Acknowledgements.}
This work was supported by the
National Natural Science Foundation of China (No.~12571364) and 
the Guangdong Basic and Applied Basic Research Foundation 
(No.~2024A1515010493).

\bigskip

\noindent {\bf Declaration of competing interest}

\noindent
There is no competing interest.

\bigskip

\noindent {\bf Data Availability}

\noindent
There is no data associated with this paper.

\end{document}